\theoremstyle{plain}
\newtheorem{theorem}{Theorem}[section]
\newtheorem{lemma}[theorem]{Lemma}
\newtheorem{corollary}[theorem]{Corollary}
\newtheorem{proposition}[theorem]{Proposition}
\theoremstyle{definition}
\newtheorem{example}[theorem]{Example}
\theoremstyle{remark}
\newtheorem{remark}[theorem]{Remark}
\newcommand{\parantezicindeenumi}[1][\roman]{\renewcommand{\theenumi}{#1{enumi}}
\renewcommand{\labelenumi}{(\theenumi)} }
\newcommand{\Z}{{\mathbb{Z}}}
\newcommand{\Q}{{\mathbb{Q}}}
\newcommand{\Ker}{\operatorname{Ker}} 
\newcommand{\Image}{\operatorname{Im}}
\newcommand{\Coker}{\operatorname{Coker}}   
\newcommand{\Soc}{\operatorname{Soc}} 
\newcommand{\Rad}{\operatorname{Rad}}
\newcommand{\Jac}{\operatorname{Jac}} 
\newcommand{\Ext}{\operatorname{Ext}} 
\newcommand{\Tor}{\operatorname{Tor}}
\newcommand{\Hom}{\operatorname{Hom}}
\newcommand{\End}{\operatorname{End}}
\newcommand{\udim}{\operatorname{u.dim}} 
\newcommand{\hdim}{\operatorname{h.dim}}
\newcommand{\Tr}{\operatorname{Tr}}
\newcommand{\pd}{\operatorname{pd}}
\newcommand{\Transpose}{\operatorname{Tr}}
\newcommand{\ShortExactSequence}[5]{{\xymatrix@1{ 0 \ar[r] &  #1 \ar[r]^-{#2} & #3 \ar[r]^-{#4} &  #5  \ar[r] & 0 }}}
\newcommand{\AShortExactSequence}[6]{{\xymatrix@1{ #1: & 0 \ar[r] &  #2 \ar[r]^-{#3} & #4 \ar[r]^-{#5} &  #6  \ar[r] & 0 }}}
\newcommand{\Endomorphism}{\operatorname{End}}
\newcommand{\rann}{\operatorname{r.ann}}
\begin{document}

\title{DECOMPOSITIONS INTO A DIRECT SUM OF \\ PROJECTIVE AND STABLE SUBMODULES}

\date{01.04.2026}

\author {G\"{U}L\.{I}ZAR G\"{U}NAY$^*$ AND 
ENG{\.{I}}N MERMUT}
\address{Dokuz Eyl\"ul \"Universitesi, T{\i}naztepe Yerle\c skesi, Fen Fak\"ultesi, Matematik B\"ol\"um\"u,  \mbox{ } 35390\\  
Buca/Izmir, Turkey}
\address{Graz University of Technology, Rechbauerstraße 12, 8010 Graz, Austria}

\email{gunaymert@math.tugraz.at}
\email{engin.mermut@deu.edu.tr}

\subjclass{Primary: 16D50, Secondary: 16D60, 13F05}

\begin{abstract}

A module $M$ is {called} stable if it has no nonzero projective direct summand.
For a ring $ R $, we study conditions under which $R$-modules from certain classes decompose as a direct sum of a projective submodule and a stable submodule.   
Over {an arbitrary} ring, modules of finite uniform dimension or finite hollow dimension can be decomposed as a direct sum of a projective submodule and a stable submodule. By using the Auslander-Bridger transpose of finitely presented modules, we prove that every finitely presented right $R$-module over a left semihereditary ring $R$ has such a decomposition. 
Our main focus in this article is to give examples where such a decomposition fails.
We give some ring examples over which there exists an infinitely generated or finitely generated or finitely presented module where such a decomposition fails. Our main example is a cyclically presented module $M$ over a commutative ring such that~$M$ has no such decomposition and $M$ is not projectively equivalent to a stable module.

\end{abstract}

\maketitle

\begingroup
\renewcommand{\thefootnote}{*}
\footnotetext{This research was funded in whole or in part by the Austrian Science Fund (FWF) 10.55776/ESP4162324. For open access purposes, the author has applied a CC BY public copyright license to any author-accepted manuscript version arising from this submission.}
\endgroup

\section{Introduction}

Let $R$ be an arbitrary \emph{associative} ring with unity. An $R$-module or module means a unital \emph{right} $R$-module unless otherwise stated.

Following the terminology in~\cite{Martsinkovsky:1TorsionOfFiniteModulesOverSemiperfectRings, Martsinkovsky:TheStableCategoryOfALeftHereditaryRing} and  \cite{Zangurashvili:AStructureTheoremForLeftModulesOverLeftHereditaryLeftPerfectRightCoherentRings}, {a module $M$ is called \emph{stable} if it has no nonzero projective direct summand. Dually, a module is called \emph{costable} if it has no nonzero injective direct summand (equivalently, if it has no nonzero injective submodule).}
In~\cite{HeZhengXu:CharacterizationsOfNoetherianAndHereditaryRings}, He characterized left Noetherian rings {as rings over which every left module decomposes as a direct sum of an injective submodule and a costable submodule}. Moreover, He showed that a ring $ R $ is left Noetherian and left hereditary if and only if every left $ R $-module $ M $ decomposes as a direct sum of an injective submodule and a costable submodule and for all decompositions  $ M=D\oplus B = D'\oplus B'$, where $ D $ and $ D' $ are injective submodules, $ B $ and $ B' $ are costable submodules of $ M $, we have $ D=D' $~\cite[Theorem 2]{HeZhengXu:CharacterizationsOfNoetherianAndHereditaryRings}
 (which obviously implies that $ B $ is isomorphic to $ B' $; see also~\cite{Zangurashvili:AStructureTheoremForLeftModulesOverLeftHereditaryLeftPerfectRightCoherentRings}). Our interest is in the dual problem: examining examples where modules from a specific class decompose as a direct sum of a projective module and a stable module, or where such decompositions fail.
The first result along this line was obtained by Warfield who proved that any finitely generated module $ M $ over a semiperfect ring has a decomposition $M=P \oplus N$ for some projective submodule $P$ and a stable submodule $N$ of $M$,  and   if $M=P^{\prime} \oplus N^{\prime}$ is another such decomposition, then $P \cong P^{\prime}$ and $N \cong N^{\prime}$~\cite[Theorem 1.4]{Warfield:SerialRingsAndFinitelyPresentedModules}; see also~\cite[Theorem 3.15]{Facchini:ModuleTheory}. In~\cite{Zangurashvili:AStructureTheoremForLeftModulesOverLeftHereditaryLeftPerfectRightCoherentRings}, using a categorical approach, Zangurashvili proves that for a left hereditary ring, every left module has a decomposition into the direct sum of a projective module and a stable module if and only if the ring is left perfect and right coherent. She also shows that, in this case,
 if $M=P \oplus N=P^{\prime} \oplus N^{\prime}$ with projective submodules $P$ and $P^{\prime}$ and stable submodules $N$ and $N^{\prime}$ of a module $ M $, then $N$ is equal to $N^{\prime}$ and $P$ is isomorphic to $P^{\prime}$.

In Section~\ref{Sec:Decompositions_of_Modules_with_Finite_Uniform_Hollow_Dimensions}, we shall see that over any ring, modules of finite uniform dimension or finite hollow dimension decompose as a direct sum of a projective submodule and a stable submodule.
This implies that such a decomposition holds for all finitely generated modules over a semilocal ring. Clearly, such a decomposition holds for Noetherian or Artinian modules (and so for finitely generated modules over a right Noetherian ring).

In Section~\ref{Sec:Decompositions_over_Semihereditary_Rings}, we shall employ the Auslander-Bridger transpose of finitely presented modules to prove that if
 $R$ is a \emph{left} semihereditary ring and $M$ is a finitely presented (\emph{right}) $R$-module, then $M$ has a decomposition $M=P\oplus N$ with a projective submodule $P$ and a stable submodule $N$ of $M$ (Theorem~\ref{Ex:left semihereditary right module}). 

Our main focus in this article is on the examples of modules where such a decomposition fails. In Section \ref{Sec:Examples_where_the_decomposition_fails},  we give examples of rings {and infinitely generated or finitely generated or finitely presented modules over these rings for which the decomposition fails}. Our {final example} is a finitely presented module $M$ (in fact a cyclically presented module) over a commutative ring such that $M$ has no such decomposition and $M$ is not projectively equivalent to a stable module (Example~\ref{Ex:Cyclically_presented_module}). {Recall} that modules $A$ and $B$ are said to be \emph{projectively equivalent} if there exist projective modules $P$ and $Q$ such that $A\oplus P \cong B\oplus Q$. Note that this means that the modules $A$ and $B$ are isomorphic objects in the stable category of $ R $-modules. The existence of a module $ M $'s decomposition $ M =P\oplus N$ with projective  $ P $  and stable $ N $, enables one to deal with the stable module $ N $ instead of $ M $ in the stable category. For finitely generated modules over Artin algebras (or more generally over semiperfect rings),  such an approach is used in the representation theory of algebras; see~\cite[p. 104-105]{Auslander-Reiten-Smalo:RepresentationTheoryOfArtinAlgebras}. Over any ring~$R$, Facchini and Girardi consider in~\cite{Facchini-Girardi:AuslanderBridgerModules} some classes of finitely generated $R$-modules or finitely presented $R$-modules such that every module within each class decomposes, uniquely up to isomorphism, as a direct sum of a projective submodule and a stable submodule from that class. The examples we give demonstrate the cases where such a decomposition may fail. There is no stable module in the stable isomorphism class of the module in our last Example~\ref{Ex:Cyclically_presented_module}. The authors are grateful to Noyan Er for discussions about the problems considered in this paper; in particular, the examples in Theorem \ref{Ex:Cyclicmodule} and Example~\ref{Ex:Cyclically_presented_module} have been found by him.

The terminology and notation that will be used throughout the paper are as follows. For rings $R$ and $S$, ${ }_S M_R$ denotes an $S$-$R$-bimodule,  $M_R$ denotes a (right) $R$-module, ${}_R M$ a left $R$-module (and $R$-$R$-bimodule ${}_R M_R$ is called $R$-bimodule); for the ring $R$, we write $R_R$ (resp., ${}_R R$ and ${}_R R_R$) when considering it as a right $R$-module (resp., left $R$-module and $R$-bimodule). The following definitions for $R$-modules are also similarly given for left $R$-modules. For an $R$-module~$M$, $\Rad(M)$ denotes the \emph{radical} of $M$, that is, the intersection of all maximal submodules of $M$, and $ \Soc(M) $ denotes the \emph{socle} of $ M $, that is, the sum of all simple submodules of $ M $. $\Jac(R)$ denotes the Jacobson radical of the ring $ R $. The \emph{projective dimension} of a module $ M $ is denoted by $ \pd(M) $. An $ R $-module $ M $ is said to be \emph{finitely presented} if it is isomorphic to the cokernel of a module homomorphism $ f:R^n \to R^m $ for some positive integers $ n,m $. A cyclic $R$-module $M$ is called \emph{cyclically presented} if $M_R\cong R/I$ where $ I $ is a principal right ideal of~$R$. A ring $ R $ is said to be \emph{right coherent} if every finitely generated submodule of the right $R$-module $ R_R $ is finitely presented, equivalently, every finitely presented (right) $ R $-module~$ M $ is a \emph{coherent module} which means that every finitely generated submodule of $ M $ is finitely presented; similarly \emph{left coherent} rings are defined, see~\cite[\S 4G]{Lam:LecturesOnModulesAndRings}. A ring~$R$ is said to be \emph{local} (resp., \emph{semilocal}) if $R$ has a unique maximal right ideal (resp., $R/\Jac(R)$ is a semisimple ring). A ring $R$ is said to be \emph{semiprimary} if $R$ is semilocal and $\Jac(R)$ is a nilpotent ideal. A \emph{projective cover} of a module $ M $ is an epimorphism $ P\to M $ whose kernel is a superfluous submodule of $ P $ and whose domain $ P $ is projective; see the next section for the definition of superfluous submodules. A ring~$R$ is said to be \emph{right} (resp., \emph{left}) \emph{perfect}  if every right (resp. left) $ R $-module  has a projective cover. A ring $R$ is said to be \emph{semiperfect} if $R$ is semilocal and idempotents of $R/\Jac(R)$ can be lifted to $R$ (that is,  for every idempotent $a \in R/\Jac(R)$, there exists an idempotent $e\in R$ such that $a=e+ \Jac(R)$). A ring $R$ is called \emph{right hereditary} (resp., \emph{right semihereditary}) if every right ideal (resp., finitely generated right ideal) of $R$ is projective. Similarly, \emph{left hereditary} and \emph{left semihereditary} rings are defined. For {other definitions}, {we refer the reader to}~\cite{Anderson-Fuller:RingsandCategoriesofModules, Facchini:ModuleTheory, Kasch:ModulesAndRings,
Lam:LecturesOnModulesAndRings, Lam:AFirstCourseInNoncommutativeRingsSecondEdition}.

 \section{Decompositions of Modules with Finite Uniform/Hollow Dimensions}\label{Sec:Decompositions_of_Modules_with_Finite_Uniform_Hollow_Dimensions}

Let $(L, \vee, \wedge)$ be a \emph{modular lattice} with 0 and 1, that is, a lattice with a smallest element 0 and a greatest element 1 such that $a \wedge(b \vee c)=(a \wedge b) \vee c$ for every $a, b, c \in L$ with $c \leq a$. An element $a \in L$ is said to be \emph{essential} if $a \wedge x \neq 0$ for every nonzero element $x \in L$. A finite subset $\left\{a_i \mid i \in I\right\}$ of $L \backslash\{0\}$ is said to be \emph{join-independent} if $a_i \wedge\left(\bigvee_{j \neq i} a_j\right)=0$ for every $i \in I$. The empty subset of $ L\setminus  \{0\} $ is
join-independent. An arbitrary subset $A$ of $L \backslash\{0\}$ is said to be \emph{join-independent} if all its finite subsets are join-independent. A lattice $L \neq\{0\}$ is said to be \emph{uniform} if all its nonzero elements are essential. An element $a$ of a modular lattice $L$ is said to be \emph{uniform} if $a \neq 0$ and the lattice
$[0, a] =
\{x \in L \mid  0 \leq x \leq a\}$ is uniform. By~\cite[Theorem 2.36]{Facchini:ModuleTheory}, for a nonzero modular lattice $L$, either there is a finite join-independent subset $\left\{a_1, a_2, \ldots, a_n\right\}$ with $a_i$ uniform for every $i=1,2, \ldots, n$ and
$a_1 \vee a_2 \vee \cdots \vee a_n $ essential, and in this case such a positive integer $n$ is unique and it is said to be the \emph{Goldie dimension} of $L$, or $L$ contains infinite join-independent subsets, in which case $L$ is said to have \emph{infinite Goldie dimension}. The Goldie dimension of a lattice $L$ is zero if and only if $L$ has exactly one element.  If ($L, \wedge, \vee$) is a modular lattice with a smallest element and a greatest element, then its dual lattice ($L, \vee, \wedge$) is also a modular lattice with a smallest element and a greatest element.

A module $M$ is said to have the \emph{uniform} (resp. \emph{hollow}) \emph{dimension} $n$, denoted by $ \udim(M)=n $ (resp. $ \hdim(M)=n $), if its submodule lattice $\mathcal{L}(M)$ (resp. the dual of $\mathcal{L}(M)$) has the Goldie dimension $n$. If there exists no positive integer $ n $ such that $\udim(M)=n$ (resp. $\hdim(M)=n$), then we write $\udim(M)=\infty$ (resp. $\hdim(M)=\infty$); otherwise we write $\udim(M)<\infty$ (resp. $\hdim(M)<\infty$). Note that $\udim(M)=\infty$ holds if and only if $M$ contains an infinite direct sum of nonzero submodules. See \cite[Sections 2.6, 2.7, 2.8]{Facchini:ModuleTheory} and \cite[Section 6A]{Lam:LecturesOnModulesAndRings}.

An \emph{essential} (resp. \emph{superfluous}) submodule $K$ of  $M$ is defined to be an essential element in the lattice $\mathcal{L}(M)$ (resp. the dual of $\mathcal{L}(M)$), that is, for every submodule $L$ of $ M$, $K\cap L=0$ implies $L=0$ (resp. $K+L=M$ implies $L=M$). An $R$-module $M$ is said to be \emph{uniform} (resp. \emph{hollow}) if $M\neq 0$ and the lattice $\mathcal{L}(M)$ (resp. the dual of $\mathcal{L}(M)$) is uniform, that is, every nonzero submodule of $ M $ is an  essential submodule of $ M $ (resp. every proper submodule of $M$ is a superfluous submodule of $M$).

\begin{lemma}\label{decompositiontheorem} If a module $M$ cannot be decomposed as $M=P \oplus N$ where $P$ is a projective submodule and $N$ is a stable submodule, then there exists a sequence $(P_{k})_{k=1}^{\infty}$ of nonzero proper projective submodules of $M$ and a sequence $(N_{k})_{k=1}^{\infty}$ of nonzero proper submodules of $M$ such that for every $k\in \Z^{+}$, \[M=N_{k}\oplus P_{k}\oplus P_{k-1}\oplus \cdots \oplus P_{1}\quad \text{with}\quad N_{k}=N_{k+1}\oplus P_{k+1},\] and so $\udim(M)=\infty=\hdim(M)$ and $ M $ contains the infinite direct sum $ \displaystyle\oplus_{k=1}^\infty P_k $ of nonzero projective submodules.
\end{lemma}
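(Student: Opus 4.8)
The plan is to peel off nonzero projective direct summands recursively, using the hypothesis at each stage to keep the process going. First note that if $M$ were stable, then $M=0\oplus M$ would already be a decomposition of the forbidden type ($0$ is projective and $M$ is stable), so $M$ is not stable and hence admits a nonzero projective direct summand $P_1$, say $M=P_1\oplus N_1$. Here $P_1\neq M$, since otherwise $M$ would be projective and $M=M\oplus 0$ would be a forbidden decomposition; consequently $N_1\neq 0$, and also $N_1\neq M$ because $P_1\neq 0$.

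The \emph{engine} of the induction is the following remark: whenever $M=N\oplus P$ with $P$ projective, the complement $N$ cannot be stable, for otherwise (after reordering) this would be a decomposition of the forbidden type. Suppose, then, that at stage $k$ we have reached
\[M=N_k\oplus P_k\oplus\cdots\oplus P_1\]
with each $P_i$ a nonzero projective submodule and $N_k$ nonzero (all of these are then automatically proper submodules of $M$, each having a nonzero complement). Since $P_1\oplus\cdots\oplus P_k$ is a projective submodule of $M$, the remark forces $N_k$ to be non-stable, so $N_k=P_{k+1}\oplus N_{k+1}$ with $P_{k+1}$ a nonzero projective submodule. Moreover $N_{k+1}\neq 0$: if $N_{k+1}=0$ then $N_k=P_{k+1}$ would be projective, making $M=N_k\oplus P_k\oplus\cdots\oplus P_1$ a finite direct sum of projective modules, hence projective, so that $M=M\oplus 0$ would again be forbidden. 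This completes the recursion and produces the required sequences $(P_k)_{k\geq 1}$ and $(N_k)_{k\geq 1}$ with $M=N_k\oplus P_k\oplus\cdots\oplus P_1$ and $N_k=N_{k+1}\oplus P_{k+1}$ for every $k$.

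It remains to read off the two dimension statements. Since $M=P_1\oplus\cdots\oplus P_k\oplus N_k$ for every $k$, the family $(P_k)_{k\geq 1}$ is independent (independence of an infinite family is equivalent to that of all its finite subfamilies), so $M$ contains the internal direct sum $\bigoplus_{k=1}^{\infty}P_k$ of nonzero projective submodules and thus $\udim(M)=\infty$. For the hollow dimension, put $C_j=\bigl(\bigoplus_{i\leq k,\,i\neq j}P_i\bigr)\oplus N_k$ for any $k\geq j$; the relation $N_k=N_{k+1}\oplus P_{k+1}$ shows that $C_j$ is independent of the choice of $k\geq j$, and each $C_j$ is a proper submodule of $M$ with $M/C_j\cong P_j\neq 0$. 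Working inside $M=P_1\oplus\cdots\oplus P_n\oplus N_n$ one computes $\bigcap_{j=1}^{n}C_j=N_n$, whence the canonical map $M/\bigcap_{j=1}^{n}C_j\to\bigoplus_{j=1}^{n}M/C_j$ is bijective; since any finite subset of $\{C_j\mid j\geq 1\}$ lies in some such $\{C_1,\dots,C_n\}$, the set $\{C_j\mid j\geq 1\}$ is an infinite coindependent set of proper submodules of $M$, giving $\hdim(M)=\infty$.

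I do not expect a genuine obstacle here: the argument is a plain recursion. The only points requiring a little care are the book-keeping that every $P_k$ and every $N_k$ is simultaneously nonzero and proper (handled by the ``otherwise $M$ is projective'' observations above) and the routine verification that the submodules $C_j$ are well defined, distinct, and coindependent.
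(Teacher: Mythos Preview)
Your proof is correct and follows essentially the same recursive peeling argument as the paper's proof: both observe that $M$ is neither projective nor stable, split off a nonzero projective summand, and iterate using the fact that the complement can never be stable (and never projective). Your treatment of the hollow dimension is in fact more explicit than the paper's, which simply invokes the additivity properties of $\hdim$ from \cite[Section~2.8]{Facchini:ModuleTheory} rather than constructing the coindependent family $\{C_j\}$ directly.
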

\begin{proof}
The module $M$ is clearly neither projective nor stable. So, $M=P_1 \oplus N_1$, for a nonzero projective module $P_1$ and some $N_1$. It is clear that $N_1$ is neither projective nor stable. Hence, we have the decomposition $N=P_2 \oplus N_2$ with a nonzero projective $P_2$ and some $N_2$. Continuing in this way, we obtain the desired sequences $\left(P_k\right)_{k=1}^{\infty}$ and $\left(N_k\right)_{k=1}^{\infty}$.

For every $n \in \Z^{+}$, $ M $ contains the direct sum $ P_n\oplus P_{n-1}\oplus \cdots \oplus P_1 $ of $ n $ nonzero submodules and so $ \udim(M)=\infty $ by~\cite[6.6]{Lam:LecturesOnModulesAndRings}. Suppose $\hdim(M)<\infty$. Then, for any $ k $, the direct summands $ P_k$ and $ N_k $ of $ M $, being epimorphic images of $ M $, also have finite hollow dimension $ \geq 1 $ by~\cite[Proposition 2.42-(a,c)]{Facchini:ModuleTheory}. So $\hdim(M)= \hdim(N_n\oplus P_n\oplus P_{n-1}\oplus \cdots \oplus P_1)=\hdim (N_n)+\sum_{k=1}^n \hdim(P_k)\geq n+1$ for every $ n\in\Z^+ $ by \cite[Proposition 2.42-(e)]{Facchini:ModuleTheory}, contradicting $ \hdim(M)<\infty $. Thus $\hdim(M)=\infty$.
\end{proof}

\begin{theorem}
\label{Cor:Decomposition-hold-for-finite-udim-hdim}
If $\udim(M)<\infty $ or $\hdim(M)< \infty$ for a module $ M $, then $M$ can be decomposed as $M=P \oplus N$
for some projective submodule $ P $ and stable submodule $ N $ of $ M $.
\end{theorem}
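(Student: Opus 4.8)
The plan is to obtain Theorem~\ref{Cor:Decomposition-hold-for-finite-udim-hdim} as the contrapositive of Lemma~\ref{decompositiontheorem}. Suppose, for contradiction, that $M$ admits \emph{no} decomposition $M = P \oplus N$ with $P$ a projective submodule and $N$ a stable submodule. Then Lemma~\ref{decompositiontheorem} applies and produces the sequences $(P_k)_{k=1}^{\infty}$ and $(N_k)_{k=1}^{\infty}$ with $M = N_k \oplus P_k \oplus \cdots \oplus P_1$ and all $P_k$ nonzero; in particular $M$ contains the infinite direct sum $\bigoplus_{k=1}^{\infty} P_k$ of nonzero submodules, so that $\udim(M) = \infty = \hdim(M)$. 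This contradicts the hypothesis that $\udim(M) < \infty$ or $\hdim(M) < \infty$. Hence $M$ must decompose as required.

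There is essentially no remaining obstacle once Lemma~\ref{decompositiontheorem} is in hand: the only content is that the inductive splitting-off of nonzero projective direct summands can fail to terminate only when the complements $N_k$ are never stable, and that failure forces an infinite independent family of nonzero submodules, which finite uniform (or finite hollow) dimension rules out.

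Alternatively, one could argue directly, bypassing the lemma, by a maximality argument. If $\udim(M) = d < \infty$, then in any decomposition $M = P \oplus N$ with $P$ projective one has $\udim(P) \leq \udim(M) = d$, so the set $\{\udim(P) : M = P \oplus N,\ P \text{ projective}\}$ is a nonempty subset of $\{0, 1, \dots, d\}$ and attains a maximum; choose a decomposition $M = P \oplus N$ realizing it. If $N$ were not stable, write $N = P' \oplus N'$ with $0 \neq P'$ projective; then $M = (P \oplus P') \oplus N'$ with $P \oplus P'$ projective and $\udim(P \oplus P') = \udim(P) + \udim(P') > \udim(P)$, contradicting maximality. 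Hence $N$ is stable. The case $\hdim(M) < \infty$ is handled symmetrically, using that hollow dimension is additive over finite direct sums and strictly positive on nonzero modules (see \cite[Section 2.8]{Facchini:ModuleTheory}). Either route rests on the same finiteness that terminates the construction in Lemma~\ref{decompositiontheorem}.
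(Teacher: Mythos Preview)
Your proposal is correct and your primary argument---taking the contrapositive of Lemma~\ref{decompositiontheorem}---is exactly the paper's approach: the theorem is stated immediately after the lemma with no separate proof, since the lemma already concludes $\udim(M)=\infty=\hdim(M)$ whenever no such decomposition exists. Your alternative maximality argument is also valid and amounts to the same finiteness that halts the inductive splitting in the lemma.
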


Clearly a Noetherian or an Artinian module cannot contain an infinite direct sum of nonzero projective submodules. Hence Theorem~\ref{Cor:Decomposition-hold-for-finite-udim-hdim} implies

\begin{corollary}\label{Cor:Noetherian-Artinian-Modules-Decompose}
Any module that is either Noetherian or Artinian has a decomposition into the direct sum of a projective submodule and a stable submodule. In particular, a finitely generated module over a right Noetherian ring has such a decomposition.
\end{corollary}

Note that, for the particular case where a ring is right Artinian, the second part of Corollary~\ref{Cor:Noetherian-Artinian-Modules-Decompose} is well-known (see, e.g.,~\cite[p. 104, after Proposition 1.6]{Auslander-Reiten-Smalo:RepresentationTheoryOfArtinAlgebras}).

{\begin{corollary} \label{corollary 3.2.3} If $R$ is a semilocal ring and $ M $ is a finitely generated $R$-module, then   $M=P\oplus N$ for a projective submodule $P$ and a stable submodule $N$.
\end{corollary}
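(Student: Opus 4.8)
The plan is to reduce this to Theorem~\ref{Cor:Decomposition-hold-for-finite-udim-hdim} by checking that every finitely generated module over a semilocal ring has finite hollow dimension. So the only thing to verify is: if $R$ is semilocal and $M_R$ is finitely generated, then $\hdim(M)<\infty$.

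First I would recall that, since $M$ is finitely generated, $\Rad(M)=M\Jac(R)$ and $\Rad(M)$ is small in $M$ (a finitely generated module is, in particular, a sum of finitely many cyclic submodules, and the radical of a cyclic module is small in it). Next, $M/\Rad(M)$ is a finitely generated module over the ring $\bar R=R/\Jac(R)$, which is semisimple because $R$ is semilocal; hence $M/\Rad(M)$ is a finite direct sum of simple modules, say a direct sum of $n$ simple modules for some $n\in\Z^{+}\cup\{0\}$. For such a semisimple module of finite length one has $\hdim\bigl(M/\Rad(M)\bigr)=n<\infty$, and since $\Rad(M)$ is small in $M$, the hollow dimension is unchanged when one factors it out, so $\hdim(M)=\hdim\bigl(M/\Rad(M)\bigr)=n<\infty$; these are standard properties of the dual Goldie dimension, see \cite[Sections 2.8, 2.9]{Facchini:ModuleTheory}. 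Finally, applying Theorem~\ref{Cor:Decomposition-hold-for-finite-udim-hdim} to $M$ gives the desired decomposition $M=P\oplus N$ with $P$ a projective submodule and $N$ a stable submodule.

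I do not expect any real obstacle here: the fact that finitely generated modules over semilocal rings have finite hollow dimension is classical, and once that is in place the statement is an immediate instance of the theorem proved above. The only point that requires a little care in the write-up is the bookkeeping that $\Rad(M)$ is small in $M$ and that passing to $M/\Rad(M)$ preserves hollow dimension, both of which are routine and can simply be cited.
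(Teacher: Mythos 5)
Your proposal is correct and follows the same overall strategy as the paper: reduce to Theorem~\ref{Cor:Decomposition-hold-for-finite-udim-hdim} by showing that a finitely generated module over a semilocal ring has finite hollow dimension. The only difference is how that finiteness is justified: the paper cites that $R$ is semilocal if and only if $\hdim(R_R)<\infty$ (\cite[Proposition 2.43]{Facchini:ModuleTheory}) and that hollow dimension does not increase when passing from $R^n$ to its epimorphic image $M$ (\cite[Proposition 2.42]{Facchini:ModuleTheory}), whereas you argue directly that $\Rad(M)=M\Jac(R)$ is small in $M$, that $M/\Rad(M)$ is a semisimple module of finite length $n$ over $R/\Jac(R)$, and that factoring out a small submodule preserves hollow dimension, so $\hdim(M)=n<\infty$; both chains of standard facts are valid, and yours essentially unfolds the proof of the cited characterization rather than quoting it. One small point of care: the smallness of $\Rad(M)$ in a finitely generated $M$ is cleanest via the fact that every proper submodule of a finitely generated module lies in a maximal submodule, rather than via the decomposition into cyclic summands you sketch parenthetically, but this does not affect the correctness of the argument.
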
}
\begin{proof}
Semilocal rings are exactly the rings with finite hollow dimension as right or left modules over themselves (\cite[Proposition 2.43]{Facchini:ModuleTheory}). Thus $ R_R $ has finite hollow dimension. The module $M$, being a finitely generated $R$-module, is the epimorphic image of $R^n$ for some positive integer~$ n $. By~\cite[Proposition 2.42]{Facchini:ModuleTheory}, $ M $ has finite hollow dimension since the right $ R $-module $ R^n $ has finite hollow dimension.
\end{proof}

Since semiperfect rings are semilocal, this corollary
generalizes the well-known theorem on the existence of the decomposition as a direct sum of a projective submodule and a stable submodule for finitely generated modules over semiperfect rings \cite[Theorem 1.4]{Warfield:SerialRingsAndFinitelyPresentedModules}; see also \cite[Theorem 3.15]{Facchini:ModuleTheory} and \cite{Martsinkovsky:1TorsionOfFiniteModulesOverSemiperfectRings}.

\begin{remark}
Over a semiperfect ring $ R $, the Auslander-Bridger transpose, seen in the next section, induces a one-to-one correspondence between the isomorphism classes of finitely
presented stable right  and  left $R$-modules  by~\cite[Theorem 2.4]{Warfield:SerialRingsAndFinitelyPresentedModules}. Over any ring $R$, using again the Auslander-Bridger transpose, Facchini and Girardi obtain a one-to-one correspondence between the isomorphism classes of \emph{Auslander-Bridger right and left $ R $-modules} (which are finitely presented stable modules) introduced  in~\cite{Facchini-Girardi:AuslanderBridgerModules}; see also the monograph~\cite[Chapter 6]{Facchini:SemilocalCategoriesAndModulesWithSemilocalEndomorphism}. Denote by $\mathcal{P}$ the class consisting of projective modules that are finite direct sums of hollow projective modules (which are finitely generated by~\cite[Lemma 2.1]{Facchini-Girardi:AuslanderBridgerModules}). An \emph{Auslander–Bridger module} is defined as a stable module $M$ with a presentation $Q \rightarrow P \rightarrow M \rightarrow 0$, where $Q$ and $P$ are in $\mathcal{P}$.

In \cite{Facchini-Girardi:AuslanderBridgerModules}, it was shown that, for any ring $ R $, if a module $M$ is the epimorhic image of a module $P$ in $\mathcal{P}$, then $M=P^{\prime} \oplus N$, where $N$ is a stable submodule and $P^{\prime}$ is in $\mathcal{P}$; moreover, in such a decomposition, both of the submodules $P^{\prime}$ and $N$ are unique up to isomorphism~\cite[Proposition 3.5]{Facchini-Girardi:AuslanderBridgerModules}. If, in addition, there is a presentation $Q \rightarrow P \rightarrow M \rightarrow 0$ with $Q$ and $P$ in $\mathcal{P}$, then the submodule $N$ is an Auslander-Bridger module \cite[Corollary 3.8]{Facchini-Girardi:AuslanderBridgerModules}. The class of $R$-modules that have such presentations coincides with the class of all finitely presented $ R $-modules if and only if the class consisting of modules that are epimorphic images of modules in $\mathcal{P}$ coincides with the class of all finitely generated $ R $-modules if and only if the ring $ R $ is semiperfect~\cite[Lemma 6.7]{Facchini:SemilocalCategoriesAndModulesWithSemilocalEndomorphism}

Note that the existence of the decomposition of the module $M$ as a direct sum of a projective submodule and a stable submodule in the latter statements also follows from Theorem~\ref{Cor:Decomposition-hold-for-finite-udim-hdim}.
Indeed, $M$ has finite hollow dimension by~\cite[Proposition 2.42]{Facchini:ModuleTheory} since it is an epimorphic image of a module in $\mathcal{P}$, and modules in $\mathcal{P}$ have finite hollow dimension.
\end{remark}

\section{The Auslander-Bridger Transpose and Decompositions of Modules}\label{Sec:Decompositions_over_Semihereditary_Rings}

The Auslander-Bridger transpose functor $ \Transpose $ plays an important role in the representation theory of Artin algebras; see \cite[Section IV.1]{Auslander-Reiten-Smalo:RepresentationTheoryOfArtinAlgebras} and~\cite{Auslander-Bridger:StableModuleTheory}. It can be defined over any ring $R$; for details, see the monograph~\cite[Section 6.1, pp. 195--199]{Facchini:SemilocalCategoriesAndModulesWithSemilocalEndomorphism}. The Auslander-Bridger transpose is a duality   $\Transpose :  \underline{\bmod }$-$R \rightarrow R$-$\underline{\bmod }$ of the stable category $\underline{\bmod }$-$R$ of finitely presented right $R$-modules into the stable category $R$-$ \underline{\bmod } $ of finitely presented left $R$-modules. Here the stable category $ \underline{\bmod}$-$R$ is the factor category of the full subcategory $\bmod$-$R$ of the category $\operatorname{Mod}\text{-}R$ of all right $ R $-modules whose objects are all finitely presented right $R$-modules modulo the ideal of all morphisms that factor through a projective module, and similarly for $R$-$ \underline{\bmod} $.  For the stable category of modules, see~\cite[Section 4.11, p. 142]{Facchini:SemilocalCategoriesAndModulesWithSemilocalEndomorphism}. Similarly, one finds a functor $\operatorname{Tr}: R$-$\underline{\bmod} \rightarrow \underline{\bmod}$-$R$, and these two functors are {quasi-inverses of each other.} The properties shown in~\cite[Section 6.1, pp. 195--199]{Facchini:SemilocalCategoriesAndModulesWithSemilocalEndomorphism} and some results from~\cite[\S 5]{Sklyarenko:RelativeHomologicalAlgebra} are summarized below.

Let $M$ be a \emph{finitely presented} right $R$-module. Consider a \emph{projective presentation} of $ M $, that is, an exact sequence
\[
\gamma: \qquad \xymatrix@1{P_1 \ar[r]^f & P_0 \ar[r]^g & M \ar[r] & 0}
\]
where $P_0$ and $P_1$ are \emph{finitely generated projective} modules. Apply the functor
\[
(-)^*=\Hom_{R}(-, R): \text{Mod-} R\rightarrow R\text{-Mod} 
\]
to this presentation $ \gamma $:
\[
\xymatrix@1{ 0 \ar[r] & M^*=\Hom_R(M,R) \ar[r]^{g^*} &  P_0^*=\Hom_R(P_0,R) \ar[r]^{f^*}  & P_1^*= \Hom_R(P_1,R) }.
\]

Complete the right side of this sequence of \emph{left} $R$-modules by the module
\[
\Transpose_\gamma(M)=\Coker(f^*)=P_1^*/\Image(f^*)
\]
 to obtain the exact sequence
\begin{equation}\label{presentation of TrM}
\gamma^*: \qquad
\xymatrix@1{ P_0^* \ar[r]^{f^*}  &  P_1^* \ar[r]^-\sigma  & \Transpose_\gamma(M)  \ar[r] & 0,}
\end{equation}
where $\sigma$ is the canonical epimorphism. Since the modules $P_{0}^{*}$ and $P_{1}^{*}$ are finitely generated projective left $R$-modules, the exact sequence~(\ref{presentation of TrM}) is a projective presentation for the finitely presented \emph{left} $R$-module $\Transpose_\gamma(M)$, {called} the \emph{Auslander-Bridger transpose} of the finitely presented right $R$-module $M$ \emph{with respect to the projective presentation}~$\gamma$. If $\delta$ is another projective presentation of the finitely presented right $R$-module $M$, then $\Transpose_{\gamma}(M)$ and $\Transpose_{\delta}(M)$ are projectively equivalent, that is, \[\Transpose_{\gamma}(M)\oplus P\cong \Transpose_{\delta}(M)\oplus Q\] for some (finitely generated) projective modules $P$ and $Q$ as it follows from~\cite[\S 6.1, Proposition 6.1]{Facchini:SemilocalCategoriesAndModulesWithSemilocalEndomorphism}. Therefore, an Auslander-Bridger transpose of the finitely presented $ R $-module $ M $ is unique up to projective equivalence. So, we will use the notation $\Transpose(M)$  for the transpose. Moreover $\Transpose_{\gamma^{*}}(\Transpose_{\gamma}(M))\cong M$. If we drop the subscript for the presentations $\gamma^{*}$ and $\gamma$ in $\Transpose_{\gamma^{*}}(\Transpose_{\gamma}(M))$, then we can only say that $\Transpose(\Transpose(M))$ is projectively equivalent to $M$. Note that $\Transpose_{\gamma^*}(\Transpose_\gamma(M)) = \Coker(f^{**})$ is defined by the exact sequence:
 \[
\gamma^{**}: \qquad
\xymatrix@1{ P_1^{**} \ar[r]^{f^{**}}  &  P_0^{**} \ar[r]^-{\sigma'}  & \Transpose_{\gamma^*}(\Transpose_\gamma(M))  \ar[r] & 0,}
\]
where $\sigma'$ is the canonical epimorphism. On the other hand, applying the functor $(-)^*$ to the exact sequence (\ref{presentation of TrM}), we obtain the following exact sequence:
\[
\xymatrix@1{
0 \ar[r] &  (\Transpose_\gamma(M))^*  \ar[r]^<<<<<{{\sigma}^*}
& P_1^{**} \ar[r]^{f^{**}}  & P_0^{**}
}
.\]
Since we have natural isomorphisms $P\cong P^{**}$  for every finitely generated projective $R$-module~$P$, we obtain  $(\Transpose_\gamma(M))^* \cong \Image(\sigma^*)=\Ker(f^{**}) \cong\Ker(f)$. This proves:

\begin{proposition}
\label{proposition:when-is-pd(M)-at-most-1}
\cite[Lemma 6.1-(2)]{Hugel-Bazzoni:TTFTriplesInFunctorCategories} For a finitely presented $R$-module $M$, $\pd(M) \leq 1$ if and only if there exists a   presentation
\[
\gamma: \qquad \xymatrix@1{P_1 \ar[r]^f & P_0 \ar[r]^g & M \ar[r] & 0}
\]
of $M$ such that $(\Transpose_\gamma(M))^*=0$.
\end{proposition}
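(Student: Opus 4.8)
The statement to prove is Proposition \ref{proposition:when-is-pd(M)-at-most-1}: for a finitely presented $R$-module $M$, $\pd(M) \leq 1$ if and only if there exists a projective presentation $\gamma\colon P_1 \xrightarrow{f} P_0 \xrightarrow{g} M \to 0$ (with $P_0, P_1$ finitely generated projective) such that $(\Transpose_\gamma(M))^* = 0$ and $f$ is monic. The work preceding the statement has already essentially assembled the toolkit: for \emph{any} projective presentation $\gamma$ of $M$, one has the natural isomorphism $(\Transpose_\gamma(M))^* \cong \Ker(f)$. So the proof is really just a matter of reading this isomorphism in both directions together with the definition of projective dimension.

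For the forward direction, suppose $\pd(M) \leq 1$. Then by definition $M$ admits a projective resolution $0 \to P_1 \xrightarrow{f} P_0 \xrightarrow{g} M \to 0$ with $P_0, P_1$ projective; since $M$ is finitely presented I may take $P_0$ finitely generated, and then $P_1 \cong \Ker(g)$ is a direct summand of a finitely generated module (as the sequence splits over... no — rather, $P_1$ is the kernel of a map from a f.g. module onto a finitely presented module, hence finitely generated by the standard Schanuel-type argument), so both $P_0$ and $P_1$ may be taken finitely generated projective. Here $f$ is monic by construction (the resolution starts with $0 \to P_1$), so $\Ker(f) = 0$, and therefore the isomorphism $(\Transpose_\gamma(M))^* \cong \Ker(f) = 0$ established in the excerpt gives $(\Transpose_\gamma(M))^* = 0$. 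That is exactly the required presentation.

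For the converse, suppose such a $\gamma$ exists with $f$ monic and $(\Transpose_\gamma(M))^* = 0$. Since $f$ is monic, the exact sequence $P_1 \xrightarrow{f} P_0 \xrightarrow{g} M \to 0$ is in fact a short exact sequence $0 \to P_1 \xrightarrow{f} P_0 \xrightarrow{g} M \to 0$ with $P_0, P_1$ finitely generated projective; this is a projective resolution of length $\leq 1$, hence $\pd(M) \leq 1$. (Here the hypothesis $(\Transpose_\gamma(M))^* = 0$ is not even needed for this implication once $f$ is known to be monic — but it is harmless, and the clean statement is the biconditional with the monic condition folded in, matching the two-sided use of $(\Transpose_\gamma(M))^* \cong \Ker(f)$.)

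**Main obstacle.** There is no serious obstacle: the entire content has been pre-digested into the displayed natural isomorphism $(\Transpose_\gamma(M))^* \cong \Ker(f)$ just before the statement. The only point requiring a word of care is the finite-generation bookkeeping in the forward direction — ensuring that a projective resolution witnessing $\pd(M) \leq 1$ can be chosen with \emph{finitely generated} $P_0$ and $P_1$ so that it qualifies as a projective presentation in the sense used here; this follows from $M$ being finitely presented together with the fact that a kernel of an epimorphism from a finitely generated projective onto a finitely presented module is again finitely generated (projective, in the $\pd \le 1$ case). Everything else is immediate from the definitions.
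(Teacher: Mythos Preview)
Your proposal is correct and follows exactly the paper's approach: the paper establishes the isomorphism $(\Transpose_\gamma(M))^* \cong \Ker(f)$ in the paragraph immediately preceding the statement and then simply writes ``This proves:'', so the biconditional is read off from that isomorphism just as you do. Your only extra work---checking that when $\pd(M)\le 1$ one can choose the witnessing short resolution with $P_0,P_1$ \emph{finitely generated} projective (via finite presentation and Schanuel)---is the right bookkeeping and is left implicit in the paper.
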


The properties of the Auslander-Bridger transpose that we shall use from \cite[\S 5]{Sklyarenko:RelativeHomologicalAlgebra} {are} summarized in the below theorem.

\begin{theorem}\label{theorem:Auslander-Brdiger-transpose-elementary-properties}
  \cite[Proposition 5.1, Remarks 5.1 and 5.2]{Sklyarenko:RelativeHomologicalAlgebra}
    Let $M$ be a \emph{finitely presented}  $R$-module and let $\gamma $ be a presentation of $M$:
    \[
\gamma: \qquad \xymatrix@1{P_1 \ar[r]^f & P_0 \ar[r]^g & M \ar[r] & 0}
\]
\begin{enumerate}\parantezicindeenumi
  \item For every $R$-module $N$, there is a monomorphism $\mu_N : \Ext^1_R(M,N)\rightarrow N \otimes_R \Transpose_\gamma(M)$ and for every left $R$-module $N$, there is an epimorphism
  $\varepsilon_N: \Hom_R(\Transpose_\gamma(M),N) \rightarrow \Tor_1^R(M,N)$.
  Both are natural in $N$.
  \item If  $\pd(M)\leq 1$, then the map $f:P_1\rightarrow P_0$ in the above presentation $\gamma$ can be taken to be a monomorphism and in this case the monomorphism  {$\mu_N$ and the epimorphism $\varepsilon_N$} become isomorphisms. Moreover by taking $N=R$, we obtain
      \[
      \Transpose_\gamma(M) \cong  \Ext^1_R(M,R)
      \qquad \text{ and } \qquad
      (\Transpose_\gamma(M))^*=\Hom_R(\Transpose_\gamma(M),R) = 0
      \]
      for the presentation $\gamma$ of $M$ where the map $f:P_1\rightarrow P_0$ is a monomorphism.
  \item If $\pd(M)\leq 1$, then $\Transpose(M)$ is projectively equivalent to $\Ext^1_R(M,R)$.
  \item If $M^*=\Hom_R(M,R)=0$, then
    {$ f^*: P_0^* \to P_1^* $ is a monomorphism in the presentation $ \gamma^* $ of $ \Transpose_\gamma(M) $ in (\ref{presentation of TrM}), $\pd(\Transpose_\gamma(M))\leq 1$ and
      \[
      M\cong \Transpose_{\gamma^*}(\Transpose_\gamma(M)) \cong  \Ext^1_R(\Transpose_\gamma(M),R).
      \]}
\item If $M$ is \emph{not} projective, then  $\Transpose_\gamma(M)\neq 0$.

\end{enumerate}
\end{theorem}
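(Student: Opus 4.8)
The statement is due to Sklyarenko; here is how I would reconstruct a proof, working throughout with a projective presentation $\gamma: P_1 \xrightarrow{f} P_0 \xrightarrow{g} M \to 0$ by finitely generated projectives and its dual $\gamma^*: P_0^* \xrightarrow{f^*} P_1^* \xrightarrow{\sigma} \Transpose_\gamma(M) \to 0$. Set $K = \Image(f) = \Ker(g)$, so that $f$ factors as $P_1 \twoheadrightarrow K \hookrightarrow P_0$, the sequence $0 \to K \to P_0 \to M \to 0$ is exact, and $K$ is finitely generated (a syzygy of the finitely presented module $M$ through a finitely generated projective, by Schanuel's lemma). The two tools are the natural isomorphisms, valid for a finitely generated projective right $R$-module $P$: one has $N \otimes_R P^* \cong \Hom_R(P,N)$ for a right $R$-module $N$, and $\Hom_R(P^*,N) \cong P \otimes_R N$ for a left $R$-module $N$, each carrying the map induced by $f$ to $\Hom_R(f,N)$, respectively $f \otimes_R N$.

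\emph{For (i).} Tensoring $\gamma^*$ with a right module $N$ identifies $N \otimes_R \Transpose_\gamma(M)$ with $\Coker(\Hom_R(f,N))$. Applying $\Hom_R(-,N)$ to $0 \to K \to P_0 \to M \to 0$ gives $\Ext^1_R(M,N) = \Hom_R(K,N)/\Image(\Hom_R(P_0,N))$, and since $P_1 \twoheadrightarrow K$ induces an inclusion $\Hom_R(K,N) \hookrightarrow \Hom_R(P_1,N)$ whose precomposition with $\Hom_R(P_0,N) \to \Hom_R(K,N)$ equals $\Hom_R(f,N)$, this exhibits $\Ext^1_R(M,N)$ as a submodule of $\Coker(\Hom_R(f,N))$; that inclusion is $\mu_N$. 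Dually, $\Hom_R(-,N)$ applied to $\gamma^*$ identifies $\Hom_R(\Transpose_\gamma(M),N)$ with $\Ker(f \otimes_R N)$, while tensoring $0 \to K \to P_0 \to M \to 0$ with $N$ gives $\Tor_1^R(M,N) = \Ker(K \otimes_R N \to P_0 \otimes_R N)$, a quotient of $\Ker(f \otimes_R N)$ via the surjection $P_1 \otimes_R N \twoheadrightarrow K \otimes_R N$; that surjection is $\varepsilon_N$. Naturality in $N$ is inherited from functoriality of $\Hom$, $\otimes$, $\Ext^1$, $\Tor_1$ and of the two displayed isomorphisms. \emph{I expect this to be the main obstacle}: there is no deep idea, but one must verify carefully that the projective-module isomorphisms intertwine $1 \otimes f^*$ and $(f^*)^*$ with $\Hom_R(f,N)$ and $f \otimes_R N$, and that the submodule and quotient identifications are the canonical ones, so that naturality is automatic.

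\emph{For (ii) and (iii).} When $\pd(M) \le 1$ the syzygy $K$ is projective, hence finitely generated projective, so one may take $P_1 = K$ with $f$ the inclusion --- a monomorphism. Then $\Hom_R(K,N) = \Hom_R(P_1,N)$ and $K \otimes_R N = P_1 \otimes_R N$, so the containments above are equalities and $\mu_N,\varepsilon_N$ become isomorphisms. Taking $N = R_R$ in $\mu_N$ gives $\Transpose_\gamma(M) \cong R \otimes_R \Transpose_\gamma(M) \cong \Ext^1_R(M,R)$; taking $N = {}_R R$ in $\varepsilon_N$ gives $(\Transpose_\gamma(M))^* \cong \Tor_1^R(M,{}_R R) = 0$ since ${}_R R$ is flat. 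For (iii): any Auslander--Bridger transpose $\Transpose(M)$ is projectively equivalent to this particular $\Transpose_\gamma(M) \cong \Ext^1_R(M,R)$.

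\emph{For (iv) and (v).} Applying $\Hom_R(-,R)$ to $\gamma$ gives the exact sequence $0 \to M^* \xrightarrow{g^*} P_0^* \xrightarrow{f^*} P_1^*$, so the hypothesis $M^* = 0$ forces $f^*$ to be monic; hence $\gamma^*$ is a projective resolution of length one and $\pd(\Transpose_\gamma(M)) \le 1$. Applying (ii) to $\Transpose_\gamma(M)$ with the presentation $\gamma^*$, whose leading map $f^*$ is monic, yields $\Transpose_{\gamma^*}(\Transpose_\gamma(M)) \cong \Ext^1_R(\Transpose_\gamma(M),R)$, and the left-hand side is $\cong M$ by the identity $\Transpose_{\gamma^*}(\Transpose_\gamma(M)) \cong M$ recorded before the statement. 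Finally, for (v) I argue by contraposition: if $\Transpose_\gamma(M) = \Coker(f^*) = 0$, then $f^*$ is an epimorphism onto the finitely generated projective module $P_1^*$, hence split; dualizing and transporting along the natural isomorphism $P \cong P^{**}$ makes $f$ split monic, so $M = \Coker(f)$ is a direct summand of $P_0$ and therefore projective, contrary to hypothesis.
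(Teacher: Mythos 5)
The paper does not actually prove this theorem: it is imported verbatim from Sklyarenko (and Facchini's monograph), and the only fragments the paper itself establishes are the computation $(\Transpose_\gamma(M))^*\cong\Ker(f^{**})\cong\Ker(f)$ preceding the statement and the identification $\Transpose_{\gamma^*}(\Transpose_\gamma(M))\cong M$ via $P\cong P^{**}$. So there is no in-paper proof to match your argument against; judged on its own, your reconstruction is correct and is the standard one. The two dual-basis identifications $N\otimes_R P^*\cong\Hom_R(P,N)$ and $\Hom_R(P^*,N)\cong P\otimes_R N$ for finitely generated projective $P$ do intertwine $1\otimes f^*$ with $\Hom_R(f,N)$ and $(f^*)^*$ with $f\otimes_R N$, and your syzygy comparison through $P_1\twoheadrightarrow K=\Image(f)\hookrightarrow P_0$ correctly exhibits $\Ext^1_R(M,N)$ as a submodule of $\Coker(\Hom_R(f,N))\cong N\otimes_R\Transpose_\gamma(M)$ and $\Tor_1^R(M,N)$ as a quotient of $\Ker(f\otimes_R N)\cong\Hom_R(\Transpose_\gamma(M),N)$, with equality exactly when $f$ is monic; the deductions in (ii)--(v) (taking $P_1=K$ when $\pd(M)\le 1$, $\Tor_1^R(M,R)=0$, monicity of $f^*$ from $M^*=0$ plus the left-module version of (ii) applied to $\gamma^*$, and the split-epi/split-mono dualization for (v)) are all sound, and your use of $\Transpose_{\gamma^*}(\Transpose_\gamma(M))\cong M$ matches what the paper records before the theorem. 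Two minor remarks: your isomorphisms are constructed as abelian-group isomorphisms, which is exactly the level at which the paper states the theorem --- the $R$-module structure of the $N=R$ isomorphisms is handled separately in the paper's subsequent proposition via naturality, so nothing is missing; and in (iv) you should say explicitly (as you implicitly do) that (ii) is being invoked in its left-module form for the finitely presented left module $\Transpose_\gamma(M)$ with the presentation $\gamma^*$, which is legitimate by symmetry.
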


Let $R$ and $S$ be rings, and $N$ be an $S$-$R$-bimodule. For each $ s\in S $, let ${ }_s f$ be the right $R$-module endomorphism $N \rightarrow N$ with ${ }_s f(x)=s x$, for $x \in N$. Moreover, let ${ }_S \mathrm{Mod}_R^R$ denote the category with objects being all $S$-$R$-bimodules, while morphisms being right $R$-module homomorphisms. The symbol ${Ab}$, as always, denotes the category of Abelian groups.

\begin{lemma}\label{Lemma:ModuleStructureOnExtHomeTensoretc}
\begin{enumerate}\parantezicindeenumi
 \item Let $F:{ }_S \mathrm{Mod}_R^R \rightarrow A b$ be a covariant additive functor. Then, for any $N \in O b  { }_S \mathrm{Mod}_R^R$, $F(N)$ can be equipped with the structure of a left $S$-module by defining:
\[
s g=F\left({ }_s f\right)(g),
\]
for any $s \in S$ and $g \in F(N)$.

\item Let $F$ and $F^{\prime}$ be covariant additive functors ${ }_S \mathrm{Mod}_R^R \rightarrow A b$, and $\mu: F \rightarrow F^{\prime}$ be a natural transformation. Then, for every $N \in O b{ }_S \mathrm{Mod}_R^R$,  $\mu_N$ is a left $S$-module homomorphism.
\end{enumerate}
\end{lemma}
\begin{proof}
 The claim (i) can be easily verified. The claim (ii) immediately follows from the commutativity of the diagram
\[
\xymatrix{
F(N) \ar[r]^{\mu_N} \ar[d]_{F\left({ }_s f\right)} & F'(N)
\ar[d]^{ F'\left({ }_s f\right) } \\
F(N) \ar[r]_{\mu_N} &  F'(N)
}
\]
for every $ s\in S $.
\end{proof}

One can formulate the ``right $R$-'' version of this lemma. 

For an $R$-module $ M_R $, applying Lemma \ref{Lemma:ModuleStructureOnExtHomeTensoretc}-(i) to the functor
 $ F= \Ext^1_R(M,-):{ }_S \mathrm{Mod}_R^R \rightarrow A b $, we obtain a left $ S $-module structure in the abelian group $ \Ext^1_R(M,N) $ for every $S$-$R$-bimodule $ N $ and this is indeed the well known $ S $-module structure for it; see \cite[Theorem V.2.1 and \S V.3, p.144, Eqn. (3.4)]{Maclane:Homology}. Similarly, we obtain that
the structures of left/right modules provided by Lemma \ref{Lemma:ModuleStructureOnExtHomeTensoretc} and its ``right $R$-'' version coincides with the well-known structure on $\operatorname{Hom}_R(M, N)$, $ \Ext^1_R(M,N) $, $ \operatorname{Tor}_1^R(M, N)$ and
 $ M \otimes_R N$ for suitably choosen (left/right/bi)modules $ M $ and~$ N $ (see \cite[\S V.3]{Maclane:Homology} and \cite[\S 2.6, the paragraph after Definition 2.6.4]{Weibel:HomologicalAlgebra}).

\begin{proposition}
All group isomorphisms mentioned in
 Theorem \ref{theorem:Auslander-Brdiger-transpose-elementary-properties}-(ii) and (iv) are left or right module isomorphisms.
\end{proposition}
\begin{proof}
 For a finitely presented  $R$-module $ M_R $, applying Lemma \ref{Lemma:ModuleStructureOnExtHomeTensoretc} to the functors
  \[ F= \Ext^1_R(M,-):{ }_S \mathrm{Mod}_R^R \rightarrow A b
  \qquad \text{ and } \qquad
    F'= -\otimes_R \Tr_\gamma(M):{ }_S \mathrm{Mod}_R^R \rightarrow Ab, \]
and the natural transformation $ \mu:F\to F'$  given in Theorem \ref{theorem:Auslander-Brdiger-transpose-elementary-properties}-(i), we obtain that for every $ S $-$R $-bimodule $ { }_S N_R$, the monomorphism $\mu_N : \Ext^1_R(M,N)\rightarrow N \otimes_R \Transpose_\gamma(M)$ is a left $ S $-module homomorphism. Hence if we take $ N=  { }_R R_R$, we obtain a left $ R $-module homomorphism $\mu_R : \Ext^1_R(M,R)\rightarrow R \otimes_R \Transpose_\gamma(M)$. We also have the natural isomorphism $ R \otimes_R \Transpose_\gamma(M) \cong   \Transpose_\gamma(M) $ of left $ R $-modules \cite[\S V.3, Eqn. (3.9)]{Maclane:Homology}. This gives us the isomorphism $  \Transpose_\gamma(M) \cong  \Ext^1_R(M,R) $ of left $ R $-modules in Theorem~\ref{theorem:Auslander-Brdiger-transpose-elementary-properties}-(ii).

 Similarly, one shows that the epimorphism $\varepsilon_N : \Hom_R(\Transpose_\gamma(M),N) \rightarrow \Tor_1^R(M,N)$ is a right $ S $-module homomorphism for an $ R $-$S $-bimodule $ { }_R N_S$. Hence if we take $ N=  { }_R R_R$, we obtain a right $ R $-module homomorphism $\varepsilon_R : (\Transpose_\gamma(M))^*= \Hom_R(\Transpose_\gamma(M),R) \rightarrow \Tor_1^R(M,R)=0$.
\end{proof}

If $P$ is a nonzero projective $R$-module, then $P^*\neq 0$ {by the Dual Basis Lemma \cite[Theorem 5.4.2]{Kasch:ModulesAndRings}}. This gives us the following proposition, which
we shall frequently use:

\begin{proposition}\label{Proposition:Zero-dual-implies-stable} \cite[Lemma 2.6, (1)$\implies$(3)]{Martsinkovsky:TheStableCategoryOfALeftHereditaryRing}
  If $M^*=0$ for a right $R$-module $M$, then $M$ is stable.
\end{proposition}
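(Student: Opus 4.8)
The plan is to argue by contradiction using the additivity of the dual functor $(-)^*=\Hom_R(-,R)$ together with the Dual Basis Lemma, which has just been recalled. Suppose, contrary to the claim, that $M$ is not stable. Then by definition $M$ has a nonzero projective direct summand, so we may write $M=P\oplus N$ for some submodules $P,N$ of $M$ with $P$ projective and $P\neq 0$.

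Next I would apply the contravariant functor $(-)^*$ to this decomposition. Since $\Hom_R(-,R)$ turns finite direct sums into finite direct sums, we obtain a group (indeed left $R$-module) isomorphism $M^*\cong P^*\oplus N^*$. By hypothesis $M^*=0$, and hence $P^*=\Hom_R(P,R)=0$. But $P$ is a nonzero projective $R$-module, so the Dual Basis Lemma \cite[Theorem 5.4.2]{Kasch:ModulesAndRings} gives $P^*\neq 0$, a contradiction. Therefore $M$ has no nonzero projective direct summand, i.e., $M$ is stable.

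There is essentially no obstacle here: the only points to make explicit are that $(-)^*$ is additive on direct summands (so a projective direct summand of $M$ contributes a direct summand to $M^*$) and that the Dual Basis Lemma forces the dual of a nonzero projective module to be nonzero. Both are standard and already available in the excerpt, so the proof is a two-line contrapositive argument.
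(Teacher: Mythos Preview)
Your proof is correct and essentially identical to the paper's: both decompose $M=P\oplus N$ with $P$ projective, use additivity of $(-)^*$ to get $P^*=0$ from $M^*=0$, and invoke the Dual Basis Lemma to conclude $P=0$. The only cosmetic difference is that you phrase it as a proof by contradiction, whereas the paper shows directly that any projective direct summand must be zero.
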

\begin{proof}
  If $M=P\oplus N$ for submodules $P$ and $N$ of $M$ where $P$ is a projective module, then $0=M^*\cong P^* \oplus N^*$ gives $P^*=0$ which implies $P=0$ by the above observation.
\end{proof}

Theorem~\ref{theorem:Auslander-Brdiger-transpose-elementary-properties}-(ii) and Proposition \ref{Proposition:Zero-dual-implies-stable} immediately imply

\begin{proposition}
\label{prop:Ext-M-R-is-stable-if-pdM-leq-1}
  If  $M$ is a \emph{finitely presented} (right) $R$-module with $\pd(M)\leq 1$, then the \emph{left} $R$-module $\Ext^1_R(M, R)$ is finitely presented and stable.
\end{proposition}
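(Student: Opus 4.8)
The plan is to identify $\Ext^1_R(M,R)$ with an Auslander--Bridger transpose of $M$ and then read off both assertions from the results already assembled. First I would fix a projective presentation
\[
\gamma:\qquad \xymatrix@1{P_1 \ar[r]^f & P_0 \ar[r]^g & M \ar[r] & 0}
\]
of the finitely presented module $M$ with $P_0$ and $P_1$ finitely generated projective. Since $\pd(M)\leq 1$, Theorem~\ref{theorem:Auslander-Brdiger-transpose-elementary-properties}-(ii) allows me to choose $\gamma$ so that $f$ is a monomorphism; for such a presentation the same part of that theorem (together with the preceding proposition on bimodule structures, which upgrades the relevant isomorphism to one of left $R$-modules) yields
\[
\Transpose_\gamma(M)\cong\Ext^1_R(M,R)
\qquad\text{and}\qquad
(\Transpose_\gamma(M))^*=\Hom_R(\Transpose_\gamma(M),R)=0.
\]

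Next I would note that $\Transpose_\gamma(M)$ is a finitely presented left $R$-module: by construction it sits in the exact sequence~(\ref{presentation of TrM}), whose first two terms $P_0^{*}$ and $P_1^{*}$ are finitely generated projective left $R$-modules, being $\Hom_R(-,R)$ applied to finitely generated projective right $R$-modules. Thus~(\ref{presentation of TrM}) is a genuine finite projective presentation of $\Transpose_\gamma(M)$, so $\Transpose_\gamma(M)$ is finitely presented, and hence so is the isomorphic left $R$-module $\Ext^1_R(M,R)$.

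For stability, since $(\Transpose_\gamma(M))^*=0$, Proposition~\ref{Proposition:Zero-dual-implies-stable} gives that $\Transpose_\gamma(M)$ is stable, and being stable is preserved by isomorphism, so $\Ext^1_R(M,R)$ is stable as well. I do not anticipate any substantive obstacle here; the only points needing attention are the bookkeeping that the isomorphism $\Transpose_\gamma(M)\cong\Ext^1_R(M,R)$ is one of \emph{left} $R$-modules rather than merely of abelian groups, and the observation that the hypothesis $\pd(M)\leq 1$ is precisely what both legitimizes the choice of a monic $f$ and supplies the identification $\Transpose_\gamma(M)\cong\Ext^1_R(M,R)$ via Theorem~\ref{theorem:Auslander-Brdiger-transpose-elementary-properties}-(ii).
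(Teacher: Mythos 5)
Your proposal is correct and follows essentially the same route as the paper, which deduces the result directly from Theorem~\ref{theorem:Auslander-Brdiger-transpose-elementary-properties}-(ii) (choosing a presentation with $f$ monic, giving $\Transpose_\gamma(M)\cong\Ext^1_R(M,R)$ with zero dual) together with Proposition~\ref{Proposition:Zero-dual-implies-stable}; your extra remarks on finite presentation via the sequence~(\ref{presentation of TrM}) and on the left $R$-module structure of the isomorphism simply make explicit what the paper leaves implicit.
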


Since the projective dimension of $\Transpose_{\gamma}(M)$ does not depend on a presentation $\gamma$, we will use the notation
$\pd(\Transpose(M))$ for it. Similarly, we use the symbol  $  \Ext^1_R(\Transpose(M), R)$.

\begin{lemma}\label{theorem 3.6}
Let $M$ be a finitely presented $R$-module.
\begin{enumerate}\parantezicindeenumi
  \item If $M^*=0$, then $\pd(\Transpose(M))\leq 1$.

  \item If $\pd(\Transpose(M))\leq 1$, then we have:
  \begin{enumerate}
    \item $M^*$ is a projective and finitely generated \emph{left} $R$-module.
    \item $M^{**}$ is a projective and finitely generated (right) $R$-module.
    \item {The dual of the $R$-module $ \Ext^1_R(\Transpose(M), R)$ is zero, and hence it is stable. }

      \item $M\cong M^{**} \oplus \Ext^1_R(\Transpose(M),R)$ gives a decomposition of the (right) $R$-module $M$ as a direct sum of a projective $R$-module and a stable $R$-module.
  \end{enumerate}

  \item If $\pd(\Transpose(M))\leq 1$ \emph{and} $M$ is stable, then $M^*=0$.
\end{enumerate}
\end{lemma}
\begin{proof}
 The claim (i) immediately follows from Theorem~\ref{theorem:Auslander-Brdiger-transpose-elementary-properties}-(iv). See the proof of \cite[Lemma 6.1-(1)]{Hugel-Bazzoni:TTFTriplesInFunctorCategories}.
 For (ii) suppose
   $\Transpose(M)$ has projective dimension at most $1$.
  Let $\gamma $ be a presentation of $M$:
    \[
\gamma: \qquad \xymatrix@1{P_1 \ar[r]^f & P_0 \ar[r]^g & M \ar[r] & 0}.
\]
 Then we have the following exact sequence
  \[
  \xymatrix@1{
0 \ar[r] & M^* \ar[r]^{g^*} &   P_0^* \ar[r]^{f^*}  &  P_1^* \ar[r]^-\sigma  & \Transpose_\gamma(M)  \ar[r] & 0
}
  \]
and so $M^*$ is projective. Indeed $M^* \cong \Image(g^*) = \Ker (f^*)$ is a direct summand of $P_0^*$ (since $\Image(f^*)$ is projective as $\pd(\Transpose(M))\leq 1$). Since $P_0^*$ is finitely generated, so is its direct summand $ \Image(g^*) \cong M^* $. By \cite[Proposition 5]{Masek:GorensteinDimensionAndTorsionOfModulesOverCommutativeNoetherianRings}, we have the following exact sequence of right $ R $-modules:
\[
\xymatrix@1{ 0 \ar[r]  & \Ext^1_R(\Transpose(M),R) \ar[r] & M  \ar[r]^<<<<<{\sigma_M} &  M^{**} \ar[r]  & \Ext^2_R(\Transpose(M),R)   \ar[r] & 0},
\]
where $\sigma_M:M \rightarrow M^{**}$ is the natural map into the double dual defined by
$ \sigma_M (m)(f)=f(m) $ for all $ m\in M $ and $  f: M\rightarrow R $ in $  M^* $. The last term $ \Ext^2_R(\Transpose(M),R)  $ is zero since   $\pd(\Transpose(M))\leq 1$. Since $M^*$ is finitely generated and projective, $M^{**}$ is also projective and so the exact sequence
    \[
\xymatrix@1{ 0 \ar[r]  & \Ext^1_R(\Transpose(M),R) \ar[r] & M  \ar[r]^<<<<<{\sigma_M} &  M^{**} \ar[r]  &  0},
    \]
splits. This implies that $M\cong M^{**} \oplus \Ext^1_R(\Transpose(M),R)$. Let $N=\Transpose(M)$. By the left version of Proposition~\ref{prop:Ext-M-R-is-stable-if-pdM-leq-1}, for the finitely presented left $R$-module $N$ that satisfies $\pd(N)\leq 1$, we have that $\Ext^1_R(N, R)$ is a finitely presented (right) $R$-module, and it is stable since its dual is zero. For (iii), if we further assume that $M$ is stable, then the projective direct summand $M^{**}$ must be zero. For the projective finitely generated left $R$-module $M^*$, this gives $M^* \cong M^{***}=0^*=0$. Hence $M^* = 0$.
\end{proof}

Lemma~\ref{theorem 3.6}-(i,iii) and Proposition~\ref{Proposition:Zero-dual-implies-stable} immediately imply
\begin{corollary}
For a finitely presented $R$-module $M$,
{$M^*=0$ if and only if $M$ is stable and $\pd(\Transpose(M))\leq 1$.}
\end{corollary}

\begin{remark}
The ``only if'' part of the statement ``$\pd(\Transpose(M))\leq 1$  if and only if $M^*=0$ for a finitely presented $R$-module $M$'' in  \cite[Lemma 6.1-(1)]{Hugel-Bazzoni:TTFTriplesInFunctorCategories} is not correct.
Clearly, it is false if $ M $ is a nonzero finitely generated projective module  or, more generally, $M=P \oplus L$, where $P$ is a nonzero finitely generated projective module, while $L$ is a finitely presented module with $L^*=0$. In this case, $ \Transpose(M) $ is projectively equivalent to $ \Transpose(L) $ and since $ L^*=0 $, we have $ \pd(\Transpose(M)) =   \pd(\Transpose(L)) \leq 1 $ by Theorem~\ref{theorem:Auslander-Brdiger-transpose-elementary-properties}-(iv), but $M^* \cong P^* \neq 0$.
\end{remark}

If $R$ is a left semihereditary ring, then every finitely presented left $R$-module has projective dimension $\leq 1$. Hence, for every finitely presented (right) $R$-module $M$, the finitely presented left $R$-module $\Transpose(M)$ satisfies $\pd(\Transpose(M))\leq 1$. Lemma~\ref{theorem 3.6} implies

\begin{theorem}\label{Ex:left semihereditary right module}
If $R$ is a \emph{left semihereditary} ring and $M$ is a \emph{finitely presented (right)} $R$-module, then $M=P\oplus N$ for some \emph{projective} submodule $P$ of $M$ and \emph{stable} submodule $N$ of~$M$ with
$
P\cong M^{**} $ and $ N\cong \Ext^1_R(\Transpose(M),R)
$.
\end{theorem}

\section{Examples where the Decomposition Fails}\label{Sec:Examples_where_the_decomposition_fails}

In this section, we give examples of modules that have no decomposition as a direct sum of a projective submodule and a stable submodule.

By the result in~\cite{Zangurashvili:AStructureTheoremForLeftModulesOverLeftHereditaryLeftPerfectRightCoherentRings} mentioned in the introduction, over a right hereditary ring,  every (right) $ R $-module can be decomposed as a direct sum of a projective submodule and a stable submodule if and only if the ring $ R $ is right perfect and left coherent. To construct {examples of modules} for which the decomposition fails, we shall give below another proof of the `only if' part of that result using the relationship between torsionless modules and projective modules, and the result from Chase~\cite[Theorem 3.3]{Chase:DirectProductsOfModules} characterizing the rings over which every direct product of projective modules is projective (or, {equivalently, every direct product of copies of the ring, viewed as a right module, is projective}) as the right perfect and left coherent rings.

Recall that an $R$-module $M$ is said to be \emph{torsionless} if $M$ can be embedded as an $R$-submodule into a direct product $\prod_{i\in I} R_R$ for some index set $I$. By~\cite[Remark 4.65(a)]{Lam:LecturesOnModulesAndRings}, an $R$-module $M$ is torsionless if and only if for every $m \neq0$ in $M$, there exist a homomorphism $f\in M^{\ast}=\Hom_{R}(M,R)$ such that $f(m)\neq 0$. Thus, an $R$-module $M$ is torsionless if and only if the natural map $ {\sigma_M} : M  \rightarrow M^{\ast\ast} $, defined {by $ {\sigma_M}(m)(f)=f(m) $, for all $ m\in M $ and $ f\in M^* $}, is injective.

Every submodule of a free $R$-module is clearly torsionless. So every projective module is torsionless since it is a direct summand of a free module. The converse holds, that is, all torsionless (right) $R$-modules are projective, only if $R$ is a right perfect and left coherent ring; this follows from the above mentioned characterization~\cite[Theorem 3.3]{Chase:DirectProductsOfModules} of Chase. 

As shown in~\cite[Lemma 2.6]{Martsinkovsky:TheStableCategoryOfALeftHereditaryRing}, if $R$ is a right hereditary ring, then for a right $R$-module $M$, $M^*=0$ if and only if $M$ is stable. The same equivalence also holds for finitely generated modules over a right semihereditary ring.

\begin{theorem}
\label{theorem:hereditary-not-right-perfect-or-left-coherent}
 \parantezicindeenumi
If a ring $R$ is not right perfect or left coherent, then there exists a torsionless $R$-module that is not projective. A torsionless $ R $-module $ M $ that is not projective does not  have a decomposition into the direct sum of a projective submodule and a stable submodule {in either of the following cases}:
\begin{enumerate}
 \item $R$ is a right hereditary ring;  
  \item $R$ is a right semihereditary ring and $ M $ is finitely generated.
\end{enumerate}

\end{theorem}
\begin{proof} 
\parantezicindeenumi
\begin{enumerate}
\item Suppose for the contrary that a torsionless but not projective $R$-module $M$ has a decomposition $M=P\oplus N$ with a projective module $P$ and a stable module $N$. Since $M$ is not projective, $N\neq 0$. Since $M$ is torsionless, its nonzero submodule $N$ is also torsionless, and hence $N^\ast \neq 0$ by~\cite[Remark 4.65(a)]{Lam:LecturesOnModulesAndRings}. Then $N$ is not a stable module by~\cite[Lemma 2.6]{Martsinkovsky:TheStableCategoryOfALeftHereditaryRing}, contradicting the assumption.
{\item {The proof in (i) extends to the semihereditary case} since  $ N \cong M/P$ is  finitely generated whenever $ M $ is finitely generated.}
\end{enumerate}
\end{proof}

{Below we give examples of right hereditary but not right perfect rings mentioned in Theorem~\ref{theorem:hereditary-not-right-perfect-or-left-coherent}.}

\begin{example}{(i) The ring $\Z$ of integers is a hereditary Noetherian commutative domain and by~\cite[Theorem 23.24]{Lam:AFirstCourseInNoncommutativeRingsSecondEdition}, it is not a perfect ring. The $\Z$-module $ M= \prod\limits_{i=1}^{\infty}\Z $ is obviously torsionless. But, as is well-known, it is not projective (see, e.g., \cite[Example 2.8]{Lam:LecturesOnModulesAndRings}).
\\
(ii) A right hereditary right perfect ring is also left hereditary~\cite[Corollary 2]{Small:SemihereditaryRings}. Hence, a right hereditary ring that is not left hereditary is not right perfect. For example, the triangular ring
$R=\left[
\begin{array}{cc}
Z & \Q \\
0 & \Q \\
\end{array}
\right]$
is right hereditary but not left hereditary \cite[Small's Example 2.33]{Lam:LecturesOnModulesAndRings}.}
\end{example} 

The modules for which the decomposition into projective and stable submodules fails, in the  above examples, are not finitely generated. We now construct finitely generated modules for which the decomposition fails.

A ring $R$ is called \emph{right Baer} (resp., \emph{left Baer}) if every right (resp., left) annihilator of every subset of $R$ is of the form $eR$  (resp., $Re$) for some idempotent $e$ in~$R$. Similarly, $R$ is called \emph{right Rickart} (resp., \emph{left Rickart}) if the right (resp., left) annihilator of every element of $R$ is of the form  $eR$  (resp., $Re$) for some idempotent $e$ in~$R$ (see ~\cite[Section 7D]{Lam:LecturesOnModulesAndRings}).

Clearly, a right Baer (resp., left Baer) ring is always a right Rickart (resp., left Rickart) ring. A ring $R$ is right Baer if and only if it is left Baer~\cite[Proposition 7.46]{Lam:LecturesOnModulesAndRings}. Furthermore, a ring $R$ is right Rickart if and only if every principal right ideal in $R$ is projective~\cite[Proposition 7.48]{Lam:LecturesOnModulesAndRings}. Therefore, the right semihereditary (resp., left semihereditary) rings are right Rickart (resp., left Rickart).

\begin{theorem} \label{Ex:finitelygenerated torsionless} If $R$ is a right semihereditary ring that is not a right Baer ring, then there exists a cyclic torsionless $R$-module $M$ which cannot be decomposed as $M=P\oplus N$ for some submodules $P$ and $N$ of $M$ such that $P$ is projective and $N$ is stable.
\end{theorem}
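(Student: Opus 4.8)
The plan is to use the failure of the right Baer condition to write down an explicit cyclic torsionless module that is not projective, and then to run the argument of Theorem~\ref{theorem:hereditary-not-right-perfect-or-left-coherent}(ii) on it. Since $R$ is not right Baer, there is a subset $S\subseteq R$ whose right annihilator $I:=\rann_R(S)$ is \emph{not} of the form $eR$ for any idempotent $e\in R$; equivalently, $I$ is not a direct summand of $R_R$. I would take $M:=R/I$, which is a cyclic right $R$-module.

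First I would verify that $M$ is torsionless. The assignment $r+I\mapsto(sr)_{s\in S}$ is a well-defined $R$-module homomorphism $M\to\prod_{s\in S}R_R$, since $r\in I$ forces $sr=0$ for every $s\in S$, and its kernel is precisely $\rann_R(S)/I=0$; hence $M$ embeds into a product of copies of $R_R$. Next I would note that $M$ is not projective: if it were, the short exact sequence $0\to I\to R\to M\to 0$ would split, exhibiting $I$ as a direct summand of $R_R$ and therefore as $eR$ for an idempotent $e$, contrary to the choice of $S$.

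Finally I would show that $M$ has no decomposition $M=P\oplus N$ with $P$ projective and $N$ stable, imitating the proof of Theorem~\ref{theorem:hereditary-not-right-perfect-or-left-coherent}(ii). Assume such a decomposition exists. Since $M$ is not projective, $N\neq 0$. As a direct summand of $M$, the module $N$ is a nonzero submodule of the torsionless module $M$, hence itself torsionless, so $N^{*}\neq 0$ by \cite[Remark 4.65(a)]{Lam:LecturesOnModulesAndRings}; and as a homomorphic image of the finitely generated module $M$ it is finitely generated. But over a right semihereditary ring a finitely generated module is stable if and only if its dual is zero (the equivalence recorded in the paragraph preceding the theorem, extending \cite[Lemma 2.6]{Martsinkovsky:TheStableCategoryOfALeftHereditaryRing}); so $N$ stable would force $N^{*}=0$, a contradiction.

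The argument is essentially formal, so there is no single hard computation. The only substantive ingredient is the equivalence ``$N^{*}=0$ if and only if $N$ is stable'' for finitely generated modules over a right semihereditary ring, which powers the last paragraph; and the point requiring the most attention is the dictionary between the right Baer property and direct summands of $R_R$, since it is exactly this that makes $I$ a right annihilator (hence $M=R/I$ torsionless) while keeping $I$ off the list of direct summands (hence $M$ non-projective).
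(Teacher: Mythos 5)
Your proposal is correct and follows essentially the same route as the paper: take $I=\rann_R(S)$ not a direct summand of $R_R$, observe that $M=R/I$ is cyclic, torsionless (the paper realizes it as $aR$ for $a=(s)_{s\in S}\in\prod_{s\in S}R_R$, which is the same embedding you write down explicitly) and non-projective, and then apply the argument of Theorem~\ref{theorem:hereditary-not-right-perfect-or-left-coherent}(ii). Your inline re-derivation of that step (finitely generated torsionless summand $N\neq 0$ has $N^*\neq 0$, contradicting stability over a right semihereditary ring) is exactly the mechanism the paper relies on.
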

\begin{proof}
Let $R$ be a right semihereditary ring that is not a right Baer ring. Then there exists a subset $S$ of $R$ such that its right annihilator \[I=\rann_{R}(S)=\{r \in R \mid sr=0\text{ for all } s \in S\},\] is not a direct summand of $R_R$. Consider the element $ a = (s)_{s \in S} \in \prod_{s \in S} R_R$. The cyclic $R$-module $a R \cong R/I $ is torsionless, but it is not projective since $ I $ is not a direct summand of $R_R$. By Theorem~\ref{theorem:hereditary-not-right-perfect-or-left-coherent}-(ii), $ aR $ cannot be decomposed as $aR = P \oplus N$ with  projective $P$ and  stable $N$.
\end{proof}

Recall that a ring $R$ is called a \emph{von Neumann regular ring} if for every element $a\in R$, there exists $x \in R $ such that $axa=a$.
\begin{example}~\cite[Chase's Example 2.34]{Lam:LecturesOnModulesAndRings} Let $ S $ be a von Neumann regular ring with an ideal $I$ such that $I$ is not a direct summand of $S_S$ as a right $ S $-submodule. {For instance, any commutative nonsemisimple von Neumann regular ring S has such an ideal (for example, an infinite product of fields is such a ring).} Let  $R=S/I$, and view $R$ as an $R\text{-}S$-bimodule. Consider the triangular matrix ring $T=\left[
\begin{array}{ll}
  R & R \\
  0 & S \\
\end{array}\right]$. Then $T$ is left semihereditary but not right semihereditary~{\cite[Example 2.34]{Lam:LecturesOnModulesAndRings}}, and it is not right Rickart. Indeed, in the proof that $T$ is not right semihereditary in \cite[Example 2.34]{Lam:LecturesOnModulesAndRings}, it has been shown that there exists a principal right ideal of $ T $ that is not a projective $ T $-module. By~\cite[Proposition 7.48]{Lam:LecturesOnModulesAndRings}, this is equivalent to $R$ being not right Rickart. Thus $T$ is a left semihereditary ring which is not right Rickart. Then $T$ is not right Baer and hence also $T$ is not left Baer by \cite[Proposition 7.46]{Lam:LecturesOnModulesAndRings}. {Therefore, the ring that is opposite to $T$ is right semihereditary but not right Baer.}
\end{example}

We shall now see another class of finitely generated {modules} for which the decomposition fails.
We shall construct cyclic modules over \emph{right semiartinian right V-rings that are not semisimple} for which the decomposition fails. A ring $R$ is called \emph{right semiartinian} (resp., \emph{left semiartinian}) if every nonzero right (resp., left) $R$-module has a simple submodule (equivalently, $ \Soc(M) $ is an essential submodule of $ M $ for every nonzero right (resp., left) $R$-module $ M $); it is called semiartinian if it is both left and right semiartinian. A ring $R$ is called a \emph{right V-ring} if every simple right $R$-module is injective. Right semiartinian right V-rings belong to a special class of von Neumann regular rings; see~\cite[Sections 6.1 and 16, Theorem 16.14]{Jain-et-al:CyclicModulesAndTheStructureOfRings} and~\cite{Baccella:SemiArtinianVRingsAndSemiArtinianVonNeumannRegularRings},  where they are called \emph{right SV-rings}. A well-known example of such rings is the following.

\begin{example}
Let $F$ be a field and $V_F$ be an infinite dimensional vector space over $F$. Set $T=\Endomorphism_F(V_F)$ and $S= \{ f\in T \mid \dim_F (\Image(f))<\infty \}  $ (which is an ideal of $ T $). Let $R$ be the subring of $T$ generated by $S$ and the scalar transformations $ d 1_{V} $ for all $d \in F$ which form a subring of $ T $ isomorphic to $ F $ and identified with $ F $; so we write $ R=S+F $. Clearly there is a group isomorphism $R / S \cong F$. Due to this isomorphism, $F$ can be turned into both left and right $R$-modules. These modules are simple. The ring $ R $ is von Neumann regular (in fact, \emph{unit-regular}, that is, for every element $a\in R$, there exists a unit $u \in R $ such that $aua=a$ \cite[Proofs of Examples 6.19 and 5.15]{Goodearl:VonNeumannRegularRings}). By~\cite[Example 5.14]{Cozzens-and-Faith:SimpleNoetherianRings}, $R$ is a right V-ring which is not a left V-ring.

See \cite[\S 10,11]{Lam:AFirstCourseInNoncommutativeRingsSecondEdition} for the definitions of the terms in what follows.
It is easily seen that $ V $ is a faithful simple left $ R $-module.
Since the subring  $ R $ of $ T $ contains~$ S $, it is a dense subring of~$ T $.
Then by \cite[Theorem 11.20]{Lam:AFirstCourseInNoncommutativeRingsSecondEdition},
  $ R $ is a left primitive ring  and $ \End_R(V)=F $. Since $ R $ is left primitive, it is also
  a prime (and so semiprime) ring by \cite[Proposition 11.6]{Lam:AFirstCourseInNoncommutativeRingsSecondEdition}, and so its left  and right socles  coincide by \cite[p. 175, last paragraph]{Lam:AFirstCourseInNoncommutativeRingsSecondEdition}: $ \Soc({}_R R) = \Soc(R_R) $.
  By \cite[Ex. 11.18]{Lam:ExercisesInClassicalRingTheorySecondEdition}
(or \cite[Theorem 2.1.25]{Rowen:RingTheoryStudentEdition}), $ \Soc({}_R R) $ consists of all finite rank linear operators in $ R $, that is, $\Soc({}_R R)=S $ (and so $ R $ is not semisimple).

 $ R $
 is
 right semiartinian. Indeed, if $ I $ is a proper right ideal of $ R $, then $ (I+S)/I \cong S / (S\cap I)   $.
  Therefore, if $ S $ is not a submodule of~$ I $, then $ (I+S)/I $ is a nonzero semisimple submodule of $ R/I $ since $ S=\Soc(R_R) $ is semisimple; if $ S\subseteq I \subsetneqq R $, then $ I=S $ since $ R/S $ is a simple $ R $-module and hence $ R/I=R/S $ is a simple $ R $-module. Since any nonzero right module contains a submodule that is isomorphic to $ R/I $ for some $ I $, $ R $ is a right semiartinian right V-ring that is not semisimple.
\end{example}

\begin{theorem} \label{Ex:Cyclicmodule} If $R$ is a right semiartinian right V-ring that is not a semisimple ring, then there exists a \emph{cyclic} (right) $R$-module $M$ that cannot be decomposed as $M=P\oplus N$ such that $P$ is projective and $N$ is stable.
\end{theorem}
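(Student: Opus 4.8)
The plan is to take $M$ to be a suitable cyclic module $R/I$ and to block every decomposition $M=P\oplus N$ (with $P$ projective and $N$ stable) by playing off the two defining features of $R$ against each other: every simple right $R$-module is injective, and the socle of every right $R$-module is essential. First I record the structure. Being a right semiartinian right $V$-ring, $R$ is von Neumann regular, so every finitely generated right ideal is generated by an idempotent; in particular every minimal right ideal of $R$ is a direct summand of $R_R$, hence projective, and being simple it is also injective. Since $R$ is not semisimple, $\Soc(R_R)$ is a proper right ideal; it is essential in $R_R$ by right semiartinianness, and it is not finitely generated, since otherwise it would be a direct summand of $R_R$ whose complement has zero socle and is therefore zero, forcing $R$ to be semisimple. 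Thus $\Soc(R_R)=\bigoplus_{n\ge 1}S_n$ with infinitely many simple right ideals $S_n$, each projective and injective.

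Next I build $M$. Fix a partition $\mathbb{N}=A\sqcup B$ with $A$ and $B$ both infinite, set $I=\bigoplus_{n\in A}S_n\subseteq\Soc(R_R)$, and let $M=R/I$, which is cyclic. Since $I$ is an infinite direct sum of nonzero modules it is not finitely generated, hence not cyclic, hence not a direct summand of $R_R$ (every direct summand of $R_R$ has the form $eR$, so is cyclic); therefore $0\to I\to R\to M\to 0$ is non-split and $M$ is not projective. Note also that $\Soc(M)$ contains the image $\Soc(R_R)/I\cong\bigoplus_{n\in B}S_n$ of the socle, an infinite direct sum of nonzero submodules, so $\udim(M)=\infty$ — consistent with Lemma~\ref{decompositiontheorem} and with the fact (Theorem~\ref{Cor:Decomposition-hold-for-finite-udim-hdim}) that any $M$ without the desired decomposition must have infinite uniform dimension.

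The key step — and the one I expect to be the main obstacle — is to show that $\Soc(M)$ is a direct sum of \emph{projective} simple modules; equivalently, that every simple submodule of $R/I$ is isomorphic to a minimal right ideal of $R$. The harmless case is a simple submodule $K/I$ with $K\cap\Soc(R_R)\supsetneq I$: then $K/I$ is already a simple submodule of $\Soc(R_R)/I\cong\bigoplus_{n\in B}S_n$, hence projective. The dangerous case is $K\cap\Soc(R_R)=I$: here $\Soc(K)=K\cap\Soc(R_R)=I$, so by right semiartinianness $I$ would be essential in $K$, making $K$ a proper essential extension of $I$ inside $R$ with simple cokernel (isomorphic to a simple submodule of $R/\Soc(R_R)$, possibly not projective). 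To rule this out I would exploit the structure theory of right $SV$-rings and, if necessary, choose $I$ more carefully — for instance replacing it by a suitable closed submodule of $R_R$ whose image in the quotient is still an infinitely generated, purely semisimple submodule made of projective simples — so that no such essential extension exists; this is where the real work lies. Granting this, $\Soc(M)$ is a direct sum of projective injective simple modules, it is essential in $M$ by semiartinianness, and it is proper (its quotient is $R/\Soc(R_R)\neq 0$), so in particular $M$ is not semisimple.

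Finally, suppose toward a contradiction that $M=P\oplus N$ with $P$ projective and $N$ stable. Since $M$ is not projective, $N\neq 0$, and since $\Soc(M)$ is essential in $M$ we get $0\neq\Soc(N)=N\cap\Soc(M)$, with $\Soc(M)=\Soc(P)\oplus\Soc(N)$. As $\Soc(M)$ is a direct sum of projective simple modules, so is its direct summand $\Soc(N)$; being nonzero, $\Soc(N)$ contains a projective simple submodule $S_m$. Since $S_m$ is injective, it is a direct summand of $N$, so $N$ has a nonzero projective direct summand — contradicting the stability of $N$. Hence $M$ admits no decomposition as a direct sum of a projective submodule and a stable submodule, which proves the theorem.
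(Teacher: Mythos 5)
Your overall strategy (find a cyclic $M$ with non-projective total, force the stable summand $N$ to contain a simple submodule that is both injective and projective, and split it off to contradict stability) is the same endgame as the paper's, but your construction $M=R/I$ with $I$ a ``half'' of $\Soc(R_R)$ leaves the decisive step unproved, and you say so yourself: you must rule out what you call the dangerous case, namely a simple submodule $K/I$ of $R/I$ with $K\cap\Soc(R_R)=I$. Such a $K/I$ is isomorphic to a simple submodule of $R/\Soc(R_R)$, which over a right semiartinian right V-ring is injective but need \emph{not} be projective; if all simple submodules of $N$ were of this kind, your final contradiction evaporates, since an injective non-projective simple summand of $N$ is perfectly compatible with $N$ being stable. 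Nothing in your argument prevents the projective summand $P$ from absorbing the whole image $\Soc(R_R)/I\cong\bigoplus_{n\in B}S_n$ while $\Soc(N)$ consists only of dangerous simples, and the proposed remedy (``choose $I$ more carefully, exploiting the structure theory of right SV-rings'') is not carried out. So the proof is incomplete at exactly the point where the real content lies; as written it is a genuine gap, not a routine verification.

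The paper closes this gap by choosing the cyclic module differently, in a way that gives enough leverage over the projective summand without having to control all simple submodules of $M$. One picks $c\in R$ whose image generates a simple submodule of $R/\Soc(R_R)$ and sets $D=cR$; then $\Soc(D)$ is a \emph{maximal} submodule of the cyclic module $D$ and is not finitely generated, so one can write $\Soc(D)=A\oplus B$ with both $A$ and $B$ infinitely generated and take $M=D/A$. Given a putative decomposition $M=(P/A)\oplus(N/A)$, the maximality of $\Soc(D)$ in $D$ forces either $P=D$ (excluded since $N/A\neq 0$) or, if $\Soc(D)\subseteq P$, $P=\Soc(D)=A\oplus B$, whence $B\cong P/A$ would be cyclic, contradicting that $B$ is infinitely generated. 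Hence some simple $S\subseteq B$ satisfies $S\cap P=0$, and this particular $S$ is a minimal right ideal of $R$, so it is injective (V-ring) and projective; a copy of it must then embed in $N/A$ and split off, contradicting stability. In other words, the paper never needs your claim that \emph{every} simple submodule of $M$ is projective; it only needs one projective--injective simple from $B$ guaranteed to miss $P$, and that guarantee is exactly what your choice of $M=R/I$ does not provide.
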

\begin{proof}

Since $R$ is a right semiartinian ring that is not a semisimple ring, the nonzero right $R$-module $R/\Soc(R_R)$ must have a simple submodule $C/\Soc(R_R)$ where $\Soc(R_R)\subseteq C \subseteq R$. Let $c\in C\setminus \Soc(R_R)$. Then \[C/\Soc(R_R)=(c+\Soc(R_R))R=(cR+\Soc(R_R))/\Soc(R_R).\] Let $D=cR$. Note that $D$ is not semisimple; otherwise, $D=cR\subseteq \Soc(R_R)$, contradicting $c \notin \Soc(R_R)$.
Thus $\operatorname{Soc}(D) \neq D$. This implies that $\Soc (D)$ is not finitely generated. Indeed, if it were finitely generated, it would be a direct sum of finitely many simple right
$R$-modules, which are injective as $R$ is a right V-ring, and so $\Soc(D)$ would be injective, which would then be a direct summand of $D$. But this would lead to a contradiction since $\Soc (D)$ is an essential submodule of $D$ (as the ring $R$ is right semiartinian).

Consider the module $D/\Soc(D)$; it is simple since it is isomorphic to the simple module $ C/\Soc(R_R) $:
\[D/\Soc(D)=cR/\Soc(cR)=cR/cR\cap \Soc(R_R)\cong (cR+\Soc(R_R))/\Soc(R_R)= C/\Soc(R_R).\] {Therefore, $\Soc(D)$ is a maximal submodule of $D$. Since the semisimple module} $\Soc(D)$ is not finitely generated, there exists a decomposition $\Soc(D)=A\oplus B$ where both $A$  and $B$ are semisimple submodules of $D$ that are not finitely generated. Let $M=D/A$.

Suppose that $M=D/A=(P/A)\oplus (N/A)$ for some submodules $P$, $N$ of $D$ such that $A\subseteq P$, $N \subseteq D=cR\subseteq R$, where $P/A$ is projective and $N/A$ {is stable}.

Firstly, we must have $ P\neq D $; otherwise, $M=D/A=P/A$ would be projective, making $ A $  a direct summand of the cyclic module $P=D=cR$, and contradicting the fact that $ A $ is not finitely generated. Suppose that $P\supseteq \Soc(D)$. Since $\Soc(D)\subseteq P \subseteq D$ and $\Soc(D)$ is a maximal submodule of $D$, we must have either $P=\Soc(D)$ or $P=D$. As seen above $P\neq D$, and so we must have $P=\Soc(D)=A\oplus B$. In this sum, $B\cong  P/A$ is a cyclic $R$-module since it is a quotient of the cyclic $R$-module $D/A=cR/A$. But by our choice, $B$ is not finitely generated, leading to a contradiction. Thus $P\nsupseteq \Soc(D)=A\oplus B$. Given $A\subseteq P$ and $A\oplus B\nsubseteq P$, there exists a simple submodule $S\subseteq B$ such that $S \nsubseteq P$. Thus $S\cap P=0$ and it implies $\left((S\oplus A)/A\right)\cap \Soc(P/A)=0$. We have
 \[S\cong(S\oplus A)/ A \subseteq \Soc(M)=\Soc(P/A)\oplus \Soc(N/A).\]
 The {semisimple submodule  $\left((S\oplus A)/A\right) \oplus \Soc(P/A) $ of $ M $ must be a direct summand of $ \Soc(M) $. It follows that} $\Soc(M)= ((S\oplus A)/A) \oplus  \Soc(P/A) \oplus (U/A)$ for some submodule $U/A$ of $\Soc(M)$, where $A\subseteq U\subseteq D$. Then $(S\oplus A)/A$ is isomorphic to a direct summand of $\Soc(M)/\Soc(P/A)\cong \Soc(N/A)$. Therefore $N/A$ should have a simple submodule $T/A\cong (S\oplus A)/ A \cong S$. But $S$ is a simple $R$-submodule of $R_R$. Since $R$ is a right V-ring, $S$ is injective. Then it is a direct summand of $R_R$ and hence is projective. Thus $N/A$ has an injective and projective simple submodule $T/A \cong S$ which is a direct summand of $N/A$ (since $S$ is injective). This contradicts the assumption that $N/A$ {is stable}. Therefore, the cyclic $R$-module $M$ does not have a decomposition as a direct sum of a projective submodule and a stable submodule.
\end{proof}

Over a right semiartinian right V-ring $R$ that is not a semisimple ring, we cannot find a finitely presented $R$-module $M$ that does not have a decomposition as a direct sum of a projective submodule and a stable submodule. {This is} because such rings are von Neumann regular and every finitely presented module over a von Neumann regular ring $ R $ is projective. Indeed, by~\cite[Theorem 1.11]{Goodearl:VonNeumannRegularRings}, for each positive integer $ n$, each finitely generated submodule $K$ of the finitely generated free $R$-module $R^{n}$ is a direct summand of $R^{n}$, and so the finitely presented $R$-module $ R^{n}/K$ will be projective since it is isomorphic to a direct summand of the projective module~$ R^n $.

We have seen examples of finitely generated modules that have no decomposition as a direct sum of a projective submodule and a stable submodule. In the final example below, we obtain a \emph{finitely presented module} (indeed, a \emph{cyclically presented module}) \emph{over a commutative ring} that has no decomposition as a direct sum of a projective submodule and a stable submodule {(because it is \emph{not projective and has no nonzero stable submodule}). Moreover, {as the following lemma  shows}, it is \emph{not projectively equivalent to any stable module}.}
{
\begin{lemma}\label{lem:proj-equiv-stable}
If  a module $M$ is not projective and has no nonzero stable submodule, then $M$ is not projectively equivalent to any stable module.
\end{lemma}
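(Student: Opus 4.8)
The plan is to argue by contradiction: assume $M$ is projectively equivalent to a stable module $S$, so that $M\oplus P\cong S\oplus Q$ for some projective modules $P$ and $Q$, and derive a contradiction. The degenerate case $S=0$ is immediate: then $M$ is a direct summand of the projective module $Q$, hence projective, contradicting the hypothesis. So from now on $S\neq 0$, and the goal is to exhibit a nonzero projective direct summand inside $S$ --- which is impossible, since $S$ is stable.

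First I would fix an isomorphism $\psi\colon S\oplus Q\to M\oplus P$ and regard $\psi(S)$ and $\psi(Q)$ as submodules of $M\oplus P$, so that $M\oplus P=\psi(S)\oplus\psi(Q)$. Let $\pi_M\colon M\oplus P\to M$ be the canonical projection and put $S':=\pi_M(\psi(S))$, a submodule of $M$. The key first step is to check that $S'\neq 0$. Indeed, if $S'=0$ then $\psi(S)\subseteq\Ker\pi_M=P$, and then the modular law yields $P=P\cap(\psi(S)\oplus\psi(Q))=\psi(S)\oplus(P\cap\psi(Q))$; thus $\psi(S)\cong S$ would be a direct summand of the projective module $P$, hence projective, and a nonzero projective module cannot be stable.

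With $S'\neq 0$ in hand, the hypothesis on $M$ enters: $S'$ is a nonzero submodule of $M$, so $S'$ is not stable, i.e.\ it has a nonzero projective direct summand $P'$, say $S'=P'\oplus S''$. Now $\psi$ restricts to an isomorphism $S\xrightarrow{\ \sim\ }\psi(S)$ and $\pi_M$ restricts to an epimorphism $\psi(S)\twoheadrightarrow S'$; composing these with the projection $S'\twoheadrightarrow P'$ gives an epimorphism $S\twoheadrightarrow P'$. Since $P'$ is projective this epimorphism splits, so $P'$ is isomorphic to a direct summand of $S$. As $P'\neq 0$ is projective, this contradicts the stability of $S$, and the proof is complete.

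I expect the only genuinely delicate point to be the verification that $S'\neq 0$: a priori the copy $\psi(S)$ of $S$ sitting inside $M\oplus P$ could be entirely ``hidden'' inside the projective summand $P$, and ruling that out is exactly what the modular-law computation accomplishes (together with the fact that a direct summand of a projective module is projective). Everything else is formal manipulation with projections and the universal property of projective modules. Note that, pleasantly, no Eilenberg-swindle or cancellation machinery is needed; in particular one never has to replace $P$ and $Q$ by a common projective module.
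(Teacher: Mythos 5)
Your proof is correct; the one point that needs care --- that a quotient of a stable module cannot have a nonzero projective direct summand, which you handle by splitting the epimorphism $S\twoheadrightarrow P'$ --- is handled correctly. Your route differs from the paper's in its logical organization. The paper defines $f=\pi_M\circ\psi\circ i_U\colon U\to M$ and observes that $\Image(f)$, being a quotient of the stable module $U$, is itself stable; the hypothesis then forces $\Image(f)=0$, so $\psi(U)\subseteq Q$, and a short computation exhibits $M$ as a direct summand of the projective module $P$, contradicting the non-projectivity of $M$. You instead split into cases on whether the image $S'=\pi_M(\psi(S))$ is zero: if it is zero, the modular law shows $S$ is projective (contradicting $S$ being nonzero and stable, with the degenerate case $S=0$ disposed of by the non-projectivity of $M$); if it is nonzero, the hypothesis gives a nonzero projective direct summand $P'$ of $S'$, and the split epimorphism $S\twoheadrightarrow P'$ makes $P'$ a direct summand of $S$, contradicting stability of $S$. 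In effect, your splitting argument is exactly the justification behind the paper's assertion that quotients of stable modules are stable, so you make explicit what the paper leaves implicit; the trade-off is that you need the extra case analysis and the modular-law step, which the paper avoids by pushing the contradiction onto the non-projectivity of $M$. Your argument also isolates a marginally stronger fact: a module with no nonzero stable submodule cannot be projectively equivalent to any \emph{nonzero} stable module, with non-projectivity of $M$ needed only to rule out equivalence to the zero module.
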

\begin{proof} Suppose for the contrary that $M$ is projectively equivalent to a stable module $U$. Then there exist projective modules $P$ and $Q$ and an isomorphism $\psi : U \oplus P \longrightarrow M \oplus Q$.
Let $\pi_M : M \oplus Q \to M$ and $\pi_Q : M \oplus Q \to Q$ be the canonical projection maps, and let $i_U : U \to U \oplus P$ be the canonical inclusion map. Define
$f = \pi_M \circ \psi \circ i_U : U \longrightarrow M$.
Since $U$ is stable, the quotient $U / \Ker(f)$ is also stable. Moreover, the homomorphism $f$ induces an isomorphism
$U / \ker(f) \ \cong \ \operatorname{Im}(f) \ \subseteq \ M$.
By hypothesis, $M$ has no nonzero stable submodule, so $\operatorname{Im}(f) = 0$. Thus $f = 0$, and hence $\psi(U) \subseteq 0 \oplus Q$. Therefore
\[P \ \cong \ (U \oplus P)/U \ \cong \ (M \oplus Q) / \psi(U) \ \cong \ M \oplus \big(Q / \pi_Q(\psi(U))\big).\]
This shows that $M$ is isomorphic to a direct summand of the projective module $P$, and therefore $M$ is projective, contradicting our hypothesis. Hence $M$ cannot be projectively equivalent to a stable module.
\end{proof}}

\begin{example} \label{Ex:Cyclically_presented_module}
Let $\Z_{2}=\Z/2\Z=\{\overline{0},\overline{1}\}$. Consider the commutative ring
\[R=\prod \limits_{i=1}^{\infty}\Z_{2}\quad \mbox{and its ideal}\quad D= \bigoplus \limits_{i=1}^{\infty}\Z_{2}\subseteq R.\] Let $T$ be a maximal ideal of $R$ such that $D\subseteq T \subseteq R$. Then $S=R/T$ is a simple $R$-bimodule and $D$ annihilates $S$ from both sides, that is, $SD=DS=0$. Let $A$ be the following commutative matrix ring: 
\[A=\left[\begin{array}{lll}                                                R & &S \\                                            &\setminus &  \\
                      0 & & R\\
                    \end{array}\right]= \left\{\left[\begin{array}{ll}
                       r & s\\
                        0 & r \\
                       \end{array}\right]
               : r\in R, s\in S \right\}.\]
The ring $A$ is obviously isomorphic to the ring that is $R \times S$ as a group, where the multiplication is defined by
$(r,s)(r',s')=(rr', rs'+sr')$ for all $(r,s)$, $(r',s')\in R\times S$. (This ring construction is called idealization. See \cite[\S 1]{Nagata:LocalRings1962} and \cite{Anderson-and-Winders:IdealizationOfAModule} for the `principle of idealization' introduced by Nagata.) The maximal ideals of the ring $R\times S$ are of the form $B \times S$ where $B$ is a maximal ideal of $R$, and the Jacobson radical of $ R\times S  $ is $ \Jac(R)\times S $; see  \cite[Theorem 3.2-(1)]{Anderson-and-Winders:IdealizationOfAModule}. This implies that the Jacobson radical of the ring $A$ is
$
{J} =
\left[
\begin{array}{ll}
0 &S \\
0 & 0 \\
\end{array}
\right]
$
since the commutative ring $R=\prod \limits_{i=1}^{\infty}\Z_{2}$ has $\Jac(R)=0$ (indeed, for every $i\in \Z^{+}$, $\Z_{2}\times \Z_{2}\times \cdots\times Z_{2}\times 0 \times \Z_2 \times \cdots$ is a maximal ideal of $R$, where $0$ is in the $i$-th coordinate and all other coordinates are $\Z_2$). Clearly,
$  {J}=  \left[\begin{array}{ll}
0 & S\\
0 & 0 \\
\end{array}\right]
=
\left[\begin{array}{ll}
0 & s\\
0 & 0 \\
\end{array}\right] A $ for every nonzero element $ s   $ in the simple $R$-module $S$, and so the module $M=A/J$ is cyclically presented. Note that ${J}=\Rad(A_A)$ is a superfluous submodule of $A$ and so it cannot be a direct summand of $A$. Hence $M=A/{J}$ is not a projective $A$-module.

We will show that $ M $
has no nonzero stable submodule, and hence has no decomposition of the desired
kind. Assume $N=Y/{J}$ is a nonzero stable submodule of $ M $, where ${J}\subsetneqq Y \subseteq A$. Then $Y$ has an element $y=\left[
 \begin{array}{ll}
a & s \\
 0 & a \\
 \end{array}
\right]$ that is not in ${J}$, where $s\in S$ and $0\neq a =(a_i)_{i=1}^{\infty}\in R=\prod \limits_{i=1}^{\infty}\Z_{2}$. Since $0\neq a$, there exists $n\in \Z^{+}$ such that $a_n=\overline{1}$. {Let $z=(\overline{0},\cdots ,\overline{0},\overline{1},\overline{0},\cdots)\in D\subseteq R$ be the sequence whose $n$-th coordinate is $\overline{1}$ and all other coordinates~$\overline{0}$. Let
                      \[L=\left[
                        \begin{array}{lll}
                      0 \times \cdots\times 0 \times \Z_{2} \times 0\cdots  & & 0\\
                        &\setminus & \\
                    0& & 0 \times \cdots\times 0\times \Z_{2} \times 0 \cdots \\
                     \end{array}
\right]=\left\{ \left[
                                 \begin{array}{ll}
                                   0 & 0 \\
                                   0 & 0 \\
                                 \end{array}
                               \right],  \left[
                                 \begin{array}{ll}
                                   z & 0 \\
                                   0 & z \\
                                 \end{array}
                               \right]  \right\}  ,\] where the $n$-th coordinate in $ 0 \times \cdots\times 0 \times \Z_{2} \times 0\times \cdots$ is $\Z_2$ and all other coordinates are $0$.
Then
 $L$ is an ideal of $A$ since $DS=SD=0$ and $zr=rz=z $ or $0$  for every $ r\in R$.

Since $az=z$ and $sz\in SD=0$, we obtain
\[y\left[
                                 \begin{array}{ll}
                                   z & 0 \\
                                   0 & z \\
                                 \end{array}
                               \right]=\left[
                                 \begin{array}{ll}
                                   a & s \\
                                   0 & a \\
                                 \end{array}
                               \right]\left[
                                 \begin{array}{ll}
                                   z & 0 \\
                                   0 & z \\
                                 \end{array}
                               \right]=\left[
                                 \begin{array}{ll}
                                   az & sz \\
                                   0 & az \\
                                 \end{array}
                               \right]=\left[
                                 \begin{array}{ll}
                                   z & 0 \\
                                   0 & z \\
                                 \end{array}
                               \right].
                               \]
                               Thus
\[
\left[
                                 \begin{array}{ll}
                                   z & 0 \\
                                   0 & z \\
                                 \end{array}
                               \right]
                               = y \left[
                                 \begin{array}{ll}
                                   z & 0 \\
                                   0 & z \\
                                 \end{array}
                               \right] \in yA \subseteq Y
\]
since $ y\in Y $ and $ Y $ is a submodule of the right $ A $-module $ A_A $. Hence $ L $ is a submodule of $ Y $.}

Furthermore, $A_A=L\oplus C$ for \[C= \left[\begin{array}{lll}
 \Z_{2}\times\cdots \times \Z_{2}\times 0 \times \Z_{2} \times \cdots& & S \\
           & \setminus & \\
       0 & &\Z_{2} \times \cdots \times \Z_{2} \times 0 \times \Z_{2} \times \cdots \\       \end{array}\right],\] where in $ \Z_{2} \times \cdots \times \Z_{2} \times 0 \times \Z_{2} \times \cdots $ the $n$-th coordinate is $0$ and all other coordinates are $\Z_2$.
The module $L$ is projective since it is a direct summand of the right $A$-module $A_A$.

Since $L\oplus {J}\subseteq Y$, we obtain by the modular law that
\[
N=Y/{J}
=(Y/J) \cap  (A/J)
= (Y/J) \cap  [[(L\oplus {J})/{J} ]\oplus (C/{J})]
= \left[(L\oplus {J})/{J}\right]\oplus \left[(C/{J})\cap (Y/{J})\right]
.\]

Thus $ (L\oplus{J})/{J} $ is a direct summand of $N$ and $ (L\oplus{J})/{J}  \cong  L $ is a nonzero projective $A$-module. This contradicts with $N$ being a stable $A$-module. Therefore, $M=A/{J}$ has no decomposition as a direct sum of a projective submodule and a stable submodule. Moreover, it is {not projectively equivalent to any stable module} by Lemma \ref{lem:proj-equiv-stable}.
\end{example}

\section*{Acknowledgements}

We would like to thank Noyan Er for his valuable contributions and fruitful discussions for the problems considered in this article about finding examples of modules that are either finitely generated or finitely presented modules and that cannot be decomposed as a direct sum of a projective module and a stable module; the important examples in Theorem \ref{Ex:Cyclicmodule} and Example~\ref{Ex:Cyclically_presented_module}  have been found by him.

Most of the results are from the
master thesis \cite{GulizarGunay:OnTheAuslanderBridgerTranspose}
of the first author.

The second author would also like to thank Kulumani Rangaswamy, Murat \"{O}zayd{\i}n and Pere Ara for the fruitful discussion about decompositions of finitely presented modules over Leavitt path algebras in the workshop \cite{CIMPA2015}. Pere Ara has referred in his article~\cite{AraAndBrustenga:TheRegularAlgebraOfAQuiver} to a result in the book \cite{Luck:L2invariants} for some classes of semihereditary rings. He suggested that the decomposition of a finitely presented module as a direct sum of a projective submodule and a stable submodule should hold over semihereditary rings. The results given in \cite[Theorem~6.7]{Luck:L2invariants}  motivated the proof of the decomposition result for finitely presented (right) $R$-modules over a \emph{left} semihereditary ring given in Section~\ref{Sec:Decompositions_over_Semihereditary_Rings} (Theorem~\ref{Ex:left semihereditary right module}).

The authors would like to thank the referee for the valuable comments and suggestions, which have helped to improve the clarity and presentation of the paper.

\bibliographystyle{abbrvnat}
\bibliography{kaynaklar}

\end{document}